\newdimen\bibspace
\newtheorem{Theorem}{Theorem}[section]
\newtheorem{Lemma}[Theorem]{Lemma}
\newtheorem{Proposition}[Theorem]{Proposition}
\newtheorem*{Assumption*}{Assumption (H)}
\newtheorem{Definition}[Theorem]{Definition}
\newtheorem{Remark}[Theorem]{Remark}
\newtheorem{question}[Theorem]{Question}
\def\XXint#1#2#3{{\setbox0=\hbox{$#1{#2#3}{\int}$}
  \vcenter{\hbox{$#2#3$}}\kern-.5\wd0}}
\newcommand{\be}{\begin{equation}}      \newcommand{\ee}{\end{equation}}
\newcommand{\T}{\mathcal{T}}
\newcommand{\R}{\mathbb{R}}              
\newcommand{\D}{\mathcal{D}}
\newcommand{\I}{\mathcal{I}}
\begin{document}

\title{\textbf{On the singular set of the free boundary for a Monge-Amp\`ere obstacle problem}\bigskip}

\author{\medskip  Tianling Jin,\footnote{T. Jin was partially supported by NSFC grant 12122120, and Hong Kong RGC grants GRF 16306320 and GRF 16303822.}\quad Xushan Tu, \quad
Jingang Xiong\footnote{J. Xiong was partially supported by NSFC grants 12325104 and 12271028.}}

\date{\today}

\maketitle

\begin{abstract}  
This is a continuation of our earlier work \cite{jin2025regularity} on the Monge-Amp\`ere obstacle problem
\[
\det D^2 v = v^q \chi_{\{v>0\}}, \quad v \geq 0 \text{ convex} 
\] 
with $q \in [0,n)$, where we studied the regularity of the strictly convex part of the free boundary.

In this work, we examine the non-strictly convex part of the free boundary and
establish optimal dimension bounds for its flat portion. Additionally, we investigate    
the strong maximum principle and a stability property for this Monge-Amp\`ere obstacle problem.
\medskip

\noindent{\it Keywords}:  Monge-Amp\`ere equation, obstacle problem, free boundary, singular set.

\medskip

\noindent {\it MSC (2010)}: Primary 35B25; Secondary 35J96, 35R35.

\end{abstract}

\section{Introduction}

Let $\Omega \subset \R^n$ be a bounded convex open set and let $g$ be a positive bounded function on $\Omega$. The classical Monge-Amp\`ere equation 
\begin{equation}\label{eq:mae classical}
\det D^2 w = g \quad \text{in } \Omega
\end{equation} 
concerns convex Aleksandrov solutions $w\colon \Omega \to \R$. 
The convexity assumption of $w$ guarantees the (possibly degenerate) ellipticity of the equation. The notion of strict convexity plays a fundamental role in the study of these solutions. Its importance is exemplified by Caffarelli's celebrated result \cite{caffarelli1990ilocalization,caffarelli1990interiorw2p,caffarelli1991regularity} which embody the ``strict convexity implies regularity'' principle -- a cornerstone in the field. The strict convexity is not merely a technical assumption; rather, it is actually necessary for the regularity of solutions by the famous Pogorelov examples;  see  the books \cite{figalli2017monge, gutierrez2016monge,trudinger2008monge,savin2024monge}. 

For a solution $w$ of \eqref{eq:mae classical}, let us denote by $\Sigma_w$ its ``singular set" consisting of non-strictly convex points of $w$, that is,
\[
\Sigma_w:=\{ x \in \Omega:\; \exists z\in \Omega \setminus \{x\} \text{ and } p \in \partial u (x)  \quad s.t. \quad u(z)=u(x)+p\cdot(z-x) \},
\]
or equivalently 
\[
\Sigma_w=\{ x \in \Omega:\; x \text{ is not an exposed point
of the epigraph of } w\} .
\]
From Caffarelli's papers \cite{caffarelli1990ilocalization,caffarelli1993note}, it is known that $\Sigma_w$
is a closed (relatively to $\Omega$) set that emanates from $\partial \Omega$. More precisely, any affine component of $w$ -- that is,  a maximal non-trivial convex subset $E \subset \Omega$ on which $w$ is linear -- satisfies $(\overline{E})^{ext} \subset \partial\Omega$ with the dimensional constraint $\dim E < n/2$, where $(\overline{E})^{ext}$ denotes the set of extreme points of $\overline{E}$. 

In \cite{savin2005obstacle}, Savin studied convex solutions to the Monge-Amp\`ere obstacle problem
\begin{equation}\label{eq:obs problem savin}
\det  D^2 v = g \chi_{\{v > 0\}} \ \text{ in } \Omega, \quad  v> 0\ \text{ on } \partial\Omega,
\end{equation}
where $\chi_E$ denotes the characteristic function of a set $E$, and $g$ is a positive bounded function satisfying $0 < \lambda \leq g \leq \Lambda$ on $\Omega$. He proved that the free boundary $\partial \{v=0\}$ is $C^{1,\alpha}$, and furthermore, when $g\equiv 1$, the free boundary is $C^{1,1}$ and  uniformly  convex. G\'alvez-Jim\'enez-Mira \cite{galvez2015classification} proved in two dimensions and Huang-Tang-Wang \cite{huang2024regularity} in higher dimensions that the free boundary is $C^{\infty}$  and analytic, provided that $g$ is $C^{\infty}$ and analytic, respectively. 

Let $q  \in [0, n)$ be a fixed constant. In \cite{jin2025regularity}, we studied the following Monge-Amp\`ere obstacle problem
\begin{equation}\label{eq:obs problem q}
\det D^2 v = g v^{q} \chi_{\{v > 0\}} \quad \text{in } \Omega, \quad v \geq 0 \text{ is convex}, 
\end{equation}
which emerges from the $L_p$ Minkowski problem. 
Let the convex set 
\[
K:= \left\{ x\in\Omega:\; v(x)=0\right\} 
\]
be the coincidence set, and $$\Gamma= \Omega \cap \partial K$$ be the free boundary in $\Omega$.
In \cite{jin2025regularity}, we also verified the ``strict convexity implies regularity" principle, i.e., we proved local smoothness for both solutions and free boundaries near the strictly convex part $\Gamma_{sc}$ of the free boundary, where
\[
\Gamma_{sc}:= \left\{x \in \Omega:\; x \text{ is an exposed point of } K  \right\}\subset \Gamma.
\]
Since we allow $v$ to vanish somewhere on $\partial\Omega$, it is possible that $\Gamma_{sc}\neq \Gamma$ and the solution $v$ can be merely Lipschitz at 
\[
\Gamma_{nsc}:=\Gamma\setminus\Gamma_{sc};
\] 
see the examples in Remark 2.19 in \cite{jin2025regularity}. Therefore, in this framework, the non-strictly convex component $\Gamma_{nsc}$ emerges naturally as the free boundary's singular set, mirroring the role of $\Sigma_w$ in the classical theory.

In this paper, we extend Caffarelli's optimal dimension estimate in \cite{caffarelli1993note} for the Monge-Am\`ere equation  \eqref{eq:mae classical} to the Monge-Am\`ere obstacle problem \eqref{eq:obs problem q}  for the flat parts in $\Gamma_{nsc}$. 

\begin{Theorem}[Dimension estimate for flat parts in $\Gamma_{nsc}$]\label{thm:dim of E} 
Let $q\in [0,n)$, and let $v\not\equiv 0, v \in C(\overline{\Omega})$ be a convex non-negative function satisfying
\begin{equation}\label{eq:obs subsoultion}
\det D^2 v \geq  v^{q} \chi_{\{v > 0\}} \quad \text{in } \Omega. 
\end{equation}
For any convex subset $E \subset \Gamma_{nsc}$, there holds
\begin{equation}\label{eq:dim estimate lip}
 \dim E < \frac{n + q}{2}.
\end{equation}
Furthermore, if there exists a point $x_0 \in \overline{E} \cap \partial\Omega$ and constants $C, s > 1$ such that 
\begin{equation}\label{eq:pointwise regular on bd}
v(x) \leq C|x-x_0|^{s} ,\quad \forall x \in \partial \Omega,
\end{equation}
then
\begin{equation}\label{eq:dim estimate c1}
\dim E  \leq n-\frac{(n-q)s}{2}.
\end{equation}
\end{Theorem}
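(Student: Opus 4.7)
The plan is to adapt Caffarelli's classical dimension estimate~\cite{caffarelli1993note} for the Monge-Amp\`ere equation to the obstacle setting, by comparing two competing volume estimates for sections of $v$ near a point of $E$. After an affine transformation one may assume $0$ lies in the relative interior of $E$ (or, in the boundary case, $0 = x_0$ on $\partial\Omega$ as in \eqref{eq:pointwise regular on bd}), the affine hull of $E$ is $\R^d \times \{0\}$ with $d = \dim E$, and $E \supset B_r^d \times \{0\}$ for some $r > 0$. Subtracting a supporting affine function of $v$, we may further assume $v \geq 0$ and $v = 0$ on $E$. For small $h > 0$ I consider the section $S_h = \{v < h\}$ intersected with a small fixed neighborhood of the origin, which by convexity contains the entire $d$-dimensional ball $B_r^d \times \{0\}$.

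The core of the argument is the balance of two bounds on $|S_h|$ as $h \downarrow 0$. The first is a lower bound $|S_h| \geq c\, r^d h^{(n-d)/s}$ coming from the geometry of the flat piece: at an interior point of $E$, convexity together with the boundedness of $v$ on a compact neighborhood yields the Lipschitz estimate $v(x) \leq L\,\mathrm{dist}(x,E)$, producing the lower bound with $s=1$. Under the boundary hypothesis \eqref{eq:pointwise regular on bd} the convex envelope of the pointwise decay $v(x) \leq C\,|x-x_0|^s$ on $\partial\Omega$ propagates into $\Omega$, improving the exponent to the given $s$. The second is an upper bound $|S_h| \leq C h^{(n-q)/2}$ reflecting the natural scaling of $\det D^2 v = v^q$: using the subsolution inequality $\det D^2 v \geq v^q$ and the height bound $v \leq h$ on $S_h$, I will derive this via Alexandrov's maximum principle combined with a layer-cake decomposition of $S_h$ into dyadic shells $\{2^{-(k+1)} h \leq v \leq 2^{-k} h\}$, on each of which the bound $\det D^2 v \geq (2^{-(k+1)}h)^q$ feeds into the ABP estimate.

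Combining the two inequalities produces $c\, r^d h^{(n-d)/s} \leq C\, h^{(n-q)/2}$ for all sufficiently small $h > 0$. Sending $h \to 0$ forces $(n-d)/s \geq (n-q)/2$, which rearranges to the non-strict version $\dim E \leq n - s(n-q)/2$, establishing \eqref{eq:dim estimate c1} directly and \eqref{eq:dim estimate lip} in its non-strict form with $s=1$. To upgrade \eqref{eq:dim estimate lip} to the strict inequality I run a rescaling argument at the critical dimension $d = (n+q)/2$: the natural dyadic zoom $v_\lambda(x) = \lambda^{-2n/(n-q)} v(\lambda x)$ preserves the obstacle equation, and applying the previous bound uniformly along the rescaling forces $r$ to decay to zero, contradicting the initial lower bound on the radius of the ball in $E$.

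The main obstacle is establishing the Monge-Amp\`ere volume upper bound with the correct exponent $(n-q)/2$ for a subsolution rather than a solution. A direct application of Alexandrov's maximum principle controls $\int_{S_h} \det D^2 v$ only via the $C^0$ norm of $v$ and yields too crude a bound; the layer-cake iteration sketched above is the technical heart of the argument and requires careful tracking of the geometric shrinking of the nested sections together with the degeneracy of $v^q$ as $v \to 0$. An additional subtlety in the boundary case is verifying that the hypothesized decay rate $s$ on $\partial\Omega$ indeed propagates inward via the convex hull of $v$ to produce the sharp lower bound on $|S_h|$.
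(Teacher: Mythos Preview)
Your overall strategy --- balancing a geometric lower bound on $|S_h|$ against a Monge-Amp\`ere upper bound --- matches the paper's, but two of your key steps have genuine gaps.

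First, the upper bound $|S_h|\le C h^{(n-q)/2}$ cannot hold as stated: since $E\subset\Gamma_{nsc}\subset\partial K$ and you have only subtracted a supporting plane (leaving $v\ge 0$, $v=0$ on $K$), the section $S_h$ contains all of $K$, and when $|K|>0$ its volume does not tend to zero. Your layer-cake into dyadic shells $\{2^{-(k+1)}h\le v<2^{-k}h\}$ avoids $K$, but these shells are not convex, nor are the sets $S_h\cap\{v>0\}$, so neither Alexandrov's estimate nor a comparison argument applies to them directly; you would need a new idea here. The paper sidesteps this entirely: it chooses $p\in\partial v(0)$ of \emph{maximal} norm, rotates so $p=|p|e_1$ (which automatically forces $K\subset\{x_1\le 0\}$), and works only in the convex half-section $S_h^{\tilde v}\cap\{x_1>0\}$ where the tilted function $\tilde v_\varepsilon=v-(1-\varepsilon)|p|x_1$ is strictly positive and still satisfies $\det D^2\tilde v_\varepsilon\ge\tilde v_\varepsilon^q$; then Lemma~\ref{lem:volume concidence} applies directly.

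Second, your rescaling argument for the strict inequality fails: the zoom $v_\lambda(x)=\lambda^{-2n/(n-q)}v(\lambda x)$ rescales the flat piece to radius $r/\lambda$ but also rescales the Lipschitz constant to $\lambda^{-(n+q)/(n-q)}L$, and at the critical dimension $d=(n+q)/2$ these effects cancel exactly --- both sides of your inequality are scale-invariant, so no contradiction emerges. The paper obtains strictness by a different mechanism: because $p$ was chosen with maximal norm, the tilted function $\tilde v=v-|p|x_1$ has vanishing one-sided derivative in the $e_1$-direction at $0$, so $S_h^{\tilde v}\cap\{x_1>0\}$ extends a distance $\omega(h)$ in the $e_1$-direction with $\omega(h)/h\to\infty$. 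This extra gain upgrades the lower bound to $\omega(h)h^{n-k-1}|E|_{\mathcal H^k}$ and forces the strict inequality directly, without rescaling.
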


The estimates \eqref{eq:dim estimate lip} and \eqref{eq:pointwise regular on bd} in Theorem \ref{thm:dim of E} are optimal, as shown in Theorem \ref{thm:dim optimize2} below.
\begin{Theorem}\label{thm:dim optimize2} 
Suppose $n+q>2$. For every positive integer $k < \frac{n+q}{2}$ and real $s \in [1, \frac{2n-2k}{n-q}]$, there exists a solution to  
\begin{equation}\label{eq:obs eq q g=1}
\det  D^2 v = v^{q} \chi_{\{v > 0\}} \quad 
\text{in } B_1(0), \quad v \geq 0 \text{ is convex}, 
\end{equation} 
 that
satisfies $K=\R^k\cap B_1(0)$  and 
\[
c\operatorname{dist} (x,\R^k)^{s} \leq v(x)  \leq C\operatorname{dist} (x,\R^k)^{s}\quad \text{in } B_1(0)
\]
holds for some positive constants $c$ and $C$. 
\end{Theorem}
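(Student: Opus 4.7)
The plan is to exploit the $O(k)\times O(n-k)$ symmetry by seeking a solution of the form $v(x)=V(r,\rho)$ with $r=|x'|$ and $\rho=|x''|$, which reduces the equation via the block decomposition of the Hessian to a two-dimensional PDE
\[
\det D^{2}v=(V_{rr}V_{\rho\rho}-V_{r\rho}^{2})(V_{r}/r)^{k-1}(V_{\rho}/\rho)^{n-k-1}.
\]
For the critical exponent $s_{*}:=2(n-k)/(n-q)$, I would try the separable ansatz $V(r,\rho)=\rho^{s_{*}}\Phi(r)$: the powers of $\rho$ on the two sides balance exactly (precisely when $s(n-q)=2(n-k)$), and the PDE collapses to the ODE
\[
s_{*}^{n-k}\bigl[(s_{*}-1)\Phi\Phi''-s_{*}(\Phi')^{2}\bigr](\Phi'/r)^{k-1}\Phi^{n-k-1-q}=1.
\]
Prescribing $\Phi(0)=A$ and $\Phi'(0)=0$ forces $\Phi''(0)$ through the equation, and standard ODE theory together with the sign of the right-hand side yields a local $C^{2}$ solution with $\Phi,\Phi',\Phi''>0$. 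The ODE admits a scaling $\Phi(r)\mapsto c\Phi(r/\lambda)$ with $\lambda=c^{-(n-q)/(2k)}$ which lets me enlarge the interval of existence past $r=1$ by tuning the free parameter. The resulting $v$ is convex on $B_{1}$, vanishes precisely on $\mathbb{R}^{k}\cap B_{1}$, and satisfies $c\rho^{s_{*}}\le v\le C\rho^{s_{*}}$ because $\Phi$ stays bounded away from $0$ and $\infty$ on $[0,1]$.

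For subcritical exponents $s\in(1,s_{*})$ the powers of $\rho$ no longer balance, so I would switch to a Dirichlet-problem strategy. Prescribing the convex boundary data $v|_{\partial B_{1}}=c_{0}|x''|^{s}$ and invoking the existence theory from \cite{jin2025regularity} (with symmetry transferring to the solution), the upper bound $v\le c_{0}|x''|^{s}$ follows from Monge--Amp\`ere comparison against the convex supersolution $\bar v(x)=c_{0}|x''|^{s}$, whose Monge--Amp\`ere mass vanishes since its gradient image is $(n-k)$-dimensional. For the matching lower bound I build a convex product subsolution
\[
\underline v(x)=c\,|x''|^{s}(1+\delta|x'|^{2})^{\beta},\qquad \beta=\frac{k+1}{n-q},
\]
with $\delta>0$ small. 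The choice $\beta(n-q)=k+1$ is engineered so that the $(1+\delta|x'|^{2})$-factors cancel between $\det D^{2}\underline v$ and $\underline v^{q}$; a direct block-Hessian computation then leaves
\[
\frac{\det D^{2}\underline v}{\underline v^{q}}=c^{n-q}s^{n-k}(2\delta\beta)^{k}\,|x''|^{-(2(n-k)-s(n-q))}\bigl[(s-1)-(s+2\beta-1)\delta|x'|^{2}\bigr],
\]
which is $\ge 1$ on $B_{1}$ as soon as $\delta<(s-1)/(s+2\beta-1)$ and $c$ is large enough, since the $|x''|$-exponent is nonpositive throughout the admissible range. Taking $c_{0}\ge c(1+\delta)^{\beta}$ enforces $\underline v\le c_{0}|x''|^{s}$ on $\partial B_{1}$, and comparison then yields $\underline v\le v$ in $B_{1}$, which gives both $v\ge c|x''|^{s}$ and the exact identification $K=\mathbb{R}^{k}\cap B_{1}$.

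The hardest part is the borderline Lipschitz case $s=1$, where the factor $(s-1)$ vanishes and the product subsolution above degenerates; I would handle it as a monotone limit of the $s>1$ solutions just constructed, or by a different ansatz adapted to the non-$C^{1}$ regime. A secondary delicate point is the convexity check for $\underline v$ itself: products of convex functions are not convex in general, and the smallness condition $\delta<(s-1)/(s+2\beta-1)$ is precisely what controls the off-diagonal block of the Hessian so that $\underline v$ is globally convex.
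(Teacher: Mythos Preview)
Your treatment of the range $s\in(1,s_*]$ is essentially correct and follows the same \emph{subsolution + Dirichlet + comparison} scheme as the paper, only with a multiplicative ansatz $c\,|x''|^{s}(1+\delta|x'|^{2})^{\beta}$ in place of the paper's additive one $|y|^{s}+|y|^{\gamma}(1+\tfrac12|z|^{2})$.  Both choices are tuned so that the $|x''|$- (resp.\ $|y|$-) powers and the tangential factors balance, and the comparison argument is identical; your ODE approach at $s=s_{*}$ is a pleasant alternative, though the singularity of $(\Phi'/r)^{k-1}$ at $r=0$ requires more than ``standard ODE theory''.

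The genuine gap is the Lipschitz endpoint $s=1$.  Your limiting argument cannot close: in your construction the admissible $\delta$ is bounded by $(s-1)/(s+2\beta-1)\to 0$, which forces $c^{\,n-q}(2\delta\beta)^{k}(s-1)\gtrsim 1$ and hence $c\to\infty$; after any rescaling that keeps the equation, the domain shrinks to a point, so no uniform two-sided bound $c|x''|\le v\le C|x''|$ survives in the limit.  Nor can the multiplicative form be salvaged at $s=1$: for $V(\rho,r)=\rho\,\phi(r)$ one has $V_{\rho\rho}=0$, so the $2\times2$ block determinant $V_{rr}V_{\rho\rho}-V_{r\rho}^{2}=-(\phi')^{2}\le 0$ and the function is never strictly convex.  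The paper handles $s=1$ with the \emph{additive} ansatz
\[
u(\rho,r)=\rho+\rho^{\beta}\Bigl(1+\tfrac12 r^{2}\Bigr),\qquad \beta=\frac{n-k+1+q}{k+1}>1,
\]
whose correction term supplies the missing $\rho$-curvature ($u_{\rho\rho}=\beta(\beta-1)\rho^{\beta-2}f>0$) while keeping the leading linear growth; a short Hessian computation then gives $\det D^{2}w\ge c\,w^{q}$ on a small ball, and the same Dirichlet/comparison step you used for $s>1$ finishes the construction.
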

Our proof of Theorem \ref{thm:dim optimize2} is inspired by the work of Caffarelli and Yuan \cite{caffarelli2022singular}. When $q=0$, it recovers the result of Caffarelli and Yuan \cite{caffarelli2022singular}.

Note that  $|K| = 0$ in Theorem \ref{thm:dim optimize2}, where $|K|$ denotes the $n$-dimensional Lebesgue measure of $K$. We also have the following example, in which $|K|>0$,  showing the optimality of \eqref{eq:dim estimate lip}. This construction is motivated by the work of Mooney and Rakshit \cite{mooney2021solutions, mooney2023singular}.

 \begin{Theorem}\label{thm:dim optimize} 
Suppose $n+q>2$. There exist non-trivial solutions to \eqref{eq:obs eq q g=1} such that $|K|>0$ and $\Gamma_{nsc}$ contains a convex subset $E$ of dimension $\left\lceil\frac{n+q}{2}\right\rceil - 1$,  where $\left\lceil \frac{n+q}{2} \right\rceil$ denotes the smallest integer larger than $\frac{n+q}{2}$.
\end{Theorem}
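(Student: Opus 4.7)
The plan is to construct an explicit example in the spirit of Mooney and Rakshit \cite{mooney2021solutions, mooney2023singular}. Set $k=\lceil (n+q)/2\rceil - 1$ (so that $k\ge 1$ since $n+q>2$) and split $\R^n = \R^k \times \R^{n-k}$, writing $x=(y,z)$. The candidate coincidence set is cylindrical:
\[
K \;=\; (\R^k\times \overline{D})\cap \overline{B_1},
\]
for a convex body $\overline D\subset \R^{n-k}$ of positive $(n-k)$-dimensional measure; then $|K|>0$. For each exposed point $z_0\in\partial D$ with $|z_0|<1$, the outward normal to $K$ at $(y,z_0)$ is $(0,\nu)$ (with $\nu$ the outward normal to $\partial D$ at $z_0$), and the supporting hyperplane $\R^k\times H$ of $K$ meets $K$ exactly along $\R^k\times\{z_0\}$. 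Hence the convex set $E_{z_0}:=(\R^k\times\{z_0\})\cap B_1$ is $k$-dimensional and, because $k\ge 1$, lies in $\Gamma_{nsc}$.

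For the simplest choice $\overline D = \overline{B_r^{n-k}}$, I seek a solution of product type
\[
v(y,z) = F(z)^{\sigma}\, P(y)\quad \text{on } \{|z|>r\},\qquad F(z):=|z|^2-r^2,
\]
with $\sigma>0$ and $P:\R^k\to(0,\infty)$ positive convex to be determined, extended by $v\equiv 0$ on $K$. A Schur-complement computation of $\det D^2 v$ via the $y$/$z$ block decomposition (using that the $z$-block has radial eigenvalue $2\sigma P F^{\sigma-2}[(2\sigma-1)|z|^2-r^2]$ and tangential eigenvalue $2\sigma P F^{\sigma-1}$, while the mixed block equals $2\sigma F^{\sigma-1}\nabla P\, z^{\top}$) shows that the Monge-Amp\`ere equation $\det D^2 v=v^q$ on $\{v>0\}$ reduces to the matching conditions
\[
\sigma \;=\; \frac{n-k}{n-q}, \qquad (\nabla P)^{\top}(D^2 P)^{-1}(\nabla P)\;=\;\frac{\sigma-1}{\sigma}\,P, \qquad \det D^2 P \;=\; \frac{P^{\,q-(n-k)}}{(2\sigma)^{n-k}}.
\]
For radial $P(y)=\widetilde P(|y|)$ and the substitution $\widetilde P = Q^{1-\sigma}$ (chosen to kill the first-order term), these reduce to a Lane-Emden-type ODE of the form $Q''=-CQ^{q+1}$, which admits smooth positive symmetric bounded solutions on any prescribed compact interval by an appropriate choice of initial data; scaling the initial data makes the interval of positivity contain the $y$-projection of $B_1$, yielding a positive convex $P$ on that projection.

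The verification then splits into: (i) $v$ extends by zero across $\{|z|=r\}$ to a convex function on $B_1$, which requires $\sigma\ge 1$ and uses convexity of both factors together with a direct analysis of $D^2 v$ near the free boundary; (ii) $v\in C(\overline B_1)$ with $\{v=0\}=K$ and $|K|>0$; (iii) $\det D^2 v=v^q$ on $\{v>0\}$, which holds by the derivation above; and (iv) $E_{z_0}\subset\Gamma_{nsc}$ as recorded in the first paragraph. Together these yield the theorem with $E = E_{z_0}$.

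The hard part is (i). With $\sigma=(n-k)/(n-q)$ and $k=\lceil(n+q)/2\rceil-1$, the condition $\sigma\ge 1$ is equivalent to $q\ge k$, which (depending on the parity of $n+q$) amounts essentially to $q\ge n-2$. In the complementary regime $q<n-2$ the cylindrical ansatz produces $\sigma<1$ and the resulting $v$ is not convex; one must then refine the geometry of $K$ -- for instance by replacing $\overline{B_r^{n-k}}$ with a Minkowski sum $K_0+\overline{B_r^{n-k}}$ for a suitably chosen lower-dimensional convex body $K_0\subset\R^{n-k}$, so that the flat face of the new $K$ still has dimension exactly $k$ but the effective exponent controlling the decay of $v$ transverse to the face returns to $\sigma\ge 1$; alternatively, one invokes the variational characterization of the obstacle problem with boundary data on $\partial B_1$ engineered to enforce a $k$-dimensional flat face in the coincidence set. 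Reconciling the Monge-Amp\`ere equation, global convexity of $v$, and the required face dimension in these modified geometries is the technical crux of the construction.
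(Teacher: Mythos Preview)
Your proposal pursues a direct product-type construction, whereas the paper takes an indirect route via subsolutions and the comparison principle. The paper's proof (Proposition~\ref{prop:sing on k skeleton}) starts from a convex polytope $P$ with nonempty interior, takes for each $k$-dimensional face $E_i$ a rotated/translated copy $\Phi_i$ of the merely Lipschitz solution from Theorem~\ref{thm:dim optimize2} (case $s=1$) whose zero set is the affine span of $E_i$, forms the convex subsolution
\[
w \;=\; M_2\max\{\Phi_1+M_1\ell_1,\dots,\Phi_m+M_1\ell_m,0\},
\]
and then solves the Dirichlet problem for \eqref{eq:obs eq q g=1} in a slightly enlarged domain $\Omega\supset P$ with boundary data $w$. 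The comparison principle forces $P\subset K$, while Theorem~\ref{thm:dim of E} and the chosen $\Omega$ pin down $\Gamma_{nsc}=\Gamma_k\cap\Omega$. No explicit formula for $v$ is ever needed, and the argument is uniform in $n$ and $q$.

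Your construction, by contrast, has a genuine gap that you yourself flag but do not close. With $\sigma=(n-k)/(n-q)$ and $k=\lceil(n+q)/2\rceil-1$, the convexity requirement $\sigma\ge 1$ fails in the entire range $q<k$; for instance it already fails for $q=0$ whenever $n\ge 3$. Neither of your proposed repairs is viable as stated. Replacing $\overline{B_r^{n-k}}$ by a Minkowski sum $K_0+\overline{B_r^{n-k}}$ does not change the local picture near a generic smooth point of $\partial K$: the transverse profile of $v$ is still dictated by the same scaling that forces $\sigma<1$, so convexity fails there for the same reason. And there is no variational characterization of \eqref{eq:obs eq q g=1} available here that would let you ``engineer a $k$-dimensional flat face'' without first producing a subsolution enforcing it---which is exactly what the paper does. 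Even in the narrow regime where $\sigma\ge 1$, your ODE reduction is incomplete: for $k\ge 2$ a radial $P$ on $\R^k$ brings in the factor $(\widetilde P'/\rho)^{k-1}$ from $\det D^2 P$, so the substitution $\widetilde P=Q^{1-\sigma}$ does not produce a clean autonomous equation $Q''=-CQ^{q+1}$; the resulting ODE retains first-order and $\rho$-dependent terms that you have not addressed.
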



\begin{Remark}
Our examples will also show the following phenomena:
\begin{itemize}
\item If $n + q > 2$, there exists a solution of \eqref{eq:obs eq q g=1} that is merely Lipschitz continuous. (Note that when $n=2$ and $q=0$, it was proved in  Proposition 2.18 in \cite{jin2025regularity} that $\Gamma_{nsc}= \emptyset$, and thus, the solution is $C^{1,\alpha}$ regular.)
\item If $q > 0$, there exists a merely Lipschitz solution of \eqref{eq:obs eq q g=1} for which its free boundary $\Gamma = \Gamma_{nsc}$ is smooth and has Hausdorff dimension $n - 1$. 
\item If $n \geq 3$, there exists a merely Lipschitz solution of \eqref{eq:obs eq q g=1} for which its free boundary $\Gamma$ is merely Lipschitz continuous. 

\end{itemize}

\end{Remark}

In the end of the paper, we investigate two properties for the Monge-Amp\`ere obstacle problem \eqref{eq:obs problem q}:
\begin{itemize}
\item[(i).] We discuss the validity of \emph{the strong maximum principle} near $\Gamma_{sc}$ in Proposition \ref{prop:strict decay} and show its failure at $\Gamma_{nsc}$ in Proposition \ref{prop:non-strict decay};
\item[(ii).]  
We discuss the \emph{stability} of the coincidence set under the uniform convergence of solutions in Proposition \ref{prop:stability of K}.
\end{itemize}

The paper is organized as follows: In Section \ref{sec:dimension estimates}, we prove the dimension estimates in Theorem \ref{thm:dim of E} and demonstrate their optimality as stated in Theorems \ref{thm:dim optimize2} and \ref{thm:dim optimize}. We also provide examples of an $(n-1)$-dimensional singular set of the free boundary and the failure of $W^{2,1}$ estimates in Proposition \ref{prop:cylinder K}. In Section \ref{sec:smp}, we present two additional properties -- the strong maximum principle and stability -- for the Monge-Amp\`ere obstacle problem \eqref{eq:obs problem q}.

\section{Dimension estimates for flat portions and examples}\label{sec:dimension estimates}

Let us first recall some  results on the Monge-Am\`ere obstacle problem \eqref{eq:obs problem q}  which were obtained in our earlier work \cite{{jin2025regularity}}.

\begin{Lemma}[Comparison Principle, Lemma 2.3 in \cite{jin2025regularity}]\label{lem:comparison principle obs} 
Suppose the nonnegative convex functions $\underline w \in C(\overline{ \Omega})$ and $\overline{w} \in C(\overline{ \Omega})$ satisfy 
\[
\det D^2 \underline{w} \geq  g\underline{w}^q \chi_{\{\underline w>0\}}  \quad  \text{in }\Omega, \quad 
\det D^2 \overline w\le  g\overline{w}^q     \quad  \text{in }\Omega,
\]
and $\overline w\geq \underline w$ on $\partial \Omega$, then  $\overline w \geq \underline w$ in $ \Omega$.
\end{Lemma}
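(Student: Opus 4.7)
The plan is to argue by contradiction, combined with a positive $\delta$-shift, so that the problem can be reduced to the classical Aleksandrov comparison principle for the Monge-Amp\`ere operator on a bounded domain.

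Suppose, toward a contradiction, that the set $\{x \in \Omega : \underline{w}(x) > \overline{w}(x)\}$ is non-empty. For each $\delta > 0$ I would introduce
\[
U_\delta := \{x \in \Omega : \underline{w}(x) > \overline{w}(x) + \delta\}.
\]
Because $\overline{w} \geq \underline{w}$ on $\partial\Omega$ and both functions belong to $C(\overline{\Omega})$, the closure $\overline{U_\delta}$ is disjoint from $\partial\Omega$; thus $U_\delta \Subset \Omega$ and $\underline{w} = \overline{w} + \delta$ on $\partial U_\delta$. For $\delta$ sufficiently small, $U_\delta$ remains non-empty.

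Next I would compare Monge-Amp\`ere measures on $U_\delta$. There $\underline{w} > \delta > 0$, so $\chi_{\{\underline{w}>0\}} \equiv 1$ and the subsolution inequality reads $\det D^2 \underline{w} \geq g\underline{w}^q$. Since $\underline{w} > \overline{w} \geq 0$ and $q \geq 0$, monotonicity of $t \mapsto t^q$ on $[0,\infty)$ gives $\underline{w}^q \geq \overline{w}^q$. Combining with the supersolution inequality for $\overline{w}$ (and noting $D^2(\overline{w}+\delta) = D^2\overline{w}$) yields
\[
\det D^2 \underline{w} \;\geq\; g\underline{w}^q \;\geq\; g\overline{w}^q \;\geq\; \det D^2(\overline{w}+\delta) \quad \text{in } U_\delta,
\]
as Borel measures on $U_\delta$.

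The convex continuous functions $\underline{w}$ and $\overline{w}+\delta$ then satisfy the hypotheses of the classical Aleksandrov comparison principle on the bounded domain $U_\delta$: equal boundary values on $\partial U_\delta$ and ordered Monge-Amp\`ere measures inside. This forces $\underline{w} \leq \overline{w}+\delta$ on $U_\delta$, directly contradicting the definition of $U_\delta$. Therefore $U_\delta = \emptyset$ for every $\delta > 0$, and sending $\delta \to 0^+$ concludes $\underline{w} \leq \overline{w}$ in $\Omega$. The only subtle point I foresee is ensuring compact containment of the relevant contact set in $\Omega$ so that boundary values match for the classical comparison step; introducing the strict shift by $\delta$ is precisely the device that arranges this without any extra hypothesis on how $\underline{w}$ and $\overline{w}$ meet on $\partial\Omega$, and it also takes care of the obstacle term by keeping us in the positivity region $\{\underline{w}>0\}$.
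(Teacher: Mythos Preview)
Your argument is correct and follows the standard route: introduce a strict $\delta$-shift to force compact containment of the contact set, observe that on $U_\delta$ the obstacle indicator is identically $1$ and the right-hand sides are ordered by monotonicity of $t\mapsto t^q$, and then invoke the classical Aleksandrov comparison principle to reach a contradiction. Note that the present paper does not actually supply a proof of this lemma---it is quoted verbatim from \cite{jin2025regularity}---so there is no in-paper argument to compare against; your proof is precisely the expected one.
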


The existence and uniqueness of solutions to \eqref{eq:obs problem q}  were established via Perron's method.  

\begin{Lemma}[Existence and uniqueness, Proposition 2.2 and Corollary 2.4 in \cite{jin2025regularity}]\label{lem:existence and uniqueness} 
Let $\varphi \in C(\overline{\Omega})$ be a convex function with $\varphi \geq 0$ on $\partial\Omega$. Then there exists a unique convex non-negative solution to the Dirichlet problem 
\[
\det  D^2 v = g v^{q} \chi_{\{v > 0\}} \quad 
\text{in }   \Omega,\quad 
v=\varphi \quad \text{on } \partial \Omega.
\]
\end{Lemma}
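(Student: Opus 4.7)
\emph{Uniqueness} follows at once from Lemma~\ref{lem:comparison principle obs}: given two convex non-negative solutions $v_1, v_2$ with the same boundary data $\varphi$, each is simultaneously a subsolution and a supersolution in the sense of that lemma, so applying it twice (swapping the roles of $\underline{w}$ and $\overline{w}$) yields $v_1 \le v_2 \le v_1$.

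For \emph{existence}, my plan is to use Perron's method applied to the admissible class
\[
\mathcal{F} := \bigl\{\, w \in C(\overline{\Omega}) \,:\, w \text{ convex},\ w \ge 0,\ w \le \varphi \text{ on } \partial\Omega,\ \det D^2 w \ge g w^q \chi_{\{w>0\}} \text{ in } \Omega \,\bigr\}.
\]
This class is non-empty ($w \equiv 0$ belongs), and every $w \in \mathcal{F}$ satisfies $w \le \max_{\partial\Omega}\varphi$ by the maximum principle for convex functions, so $v(x) := \sup_{w \in \mathcal{F}} w(x)$ is a well-defined convex non-negative function. Two standard Perron steps then deliver the full solution. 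First, the weak continuity of the Monge--Amp\`ere operator under monotone convergence of convex functions (together with the fact that finite maxima of convex subsolutions are again subsolutions) shows $v \in \mathcal{F}$. Second, for the matching reverse inequality on $\{v>0\}$, I would use a local lift: on any ball $B \subset\subset \{v>0\}$, the function $v$ is bounded below by a positive constant, so the Dirichlet problem $\det D^2 \tilde v = g \tilde v^q$ in $B$ with $\tilde v = v$ on $\partial B$ is a classical non-degenerate Monge--Amp\`ere problem admitting a unique convex solution $\tilde v \ge v$; if $\det D^2 v \le g v^q$ were to fail at some point of $B$, replacing $v$ by $\tilde v$ on $B$ would produce a strictly larger member of $\mathcal{F}$, contradicting the definition of $v$. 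Hence $\det D^2 v = g v^q \chi_{\{v>0\}}$ in the Aleksandrov sense, with the interior of $\{v=0\}$ causing no issue since $v$ is locally constant there.

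The main obstacle will be verifying the boundary condition. The inequality $v \le \varphi$ on $\partial\Omega$ is built into the definition of $\mathcal{F}$; the difficulty lies in the matching lower bound at a point $x_0 \in \partial\Omega$ with $\varphi(x_0) > 0$, which requires exhibiting a barrier in $\mathcal{F}$ whose value at $x_0$ is arbitrarily close to $\varphi(x_0)$. I would construct it in the spirit of the classical Monge--Amp\`ere boundary-continuity arguments. Choose an affine support $\ell$ of $\varphi$ at $x_0$ (so $\ell \le \varphi$ on $\partial\Omega$ and $\ell(x_0) = \varphi(x_0)$), fix $M \ge g \cdot (\max_{\partial\Omega}\varphi)^q + 1$, and let $U \in C(\overline{\Omega})$ be the convex Aleksandrov solution of the classical Monge--Amp\`ere problem $\det D^2 U = M$ in $\Omega$ with $U = \max(\ell,0)$ on $\partial\Omega$. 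Then $U^+ := \max(U,0)$ is convex, non-negative, bounded above by $\varphi$ on $\partial\Omega$, and satisfies $\det D^2 U^+ \ge g (U^+)^q \chi_{\{U^+>0\}}$ (the Monge--Amp\`ere measure equals $M$ on $\{U>0\}$ by choice of $M$, and the inequality is trivial on $\{U \le 0\}$), so $U^+ \in \mathcal{F}$. Since $U$ is continuous up to $\partial\Omega$ with $U(x_0) = \varphi(x_0) > 0$, we have $U^+(y) \to \varphi(x_0)$ as $y \to x_0$, and the domination $v \ge U^+$ gives $\liminf_{y \to x_0} v(y) \ge \varphi(x_0)$, completing the proof.
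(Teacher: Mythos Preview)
The paper does not give a proof of this lemma; it is quoted from \cite{jin2025regularity} with the sole remark that existence is obtained ``via Perron's method,'' and uniqueness is exactly the comparison principle of Lemma~\ref{lem:comparison principle obs}. Your proposal follows precisely this route, so the overall strategy matches what the paper indicates.

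Two points in your outline deserve care. First, for the boundary condition you only treat the lower barrier $\liminf_{y\to x_0} v(y)\ge \varphi(x_0)$; you should also justify $\limsup_{y\to x_0} v(y)\le \varphi(x_0)$, which does not literally follow from ``$w\le\varphi$ on $\partial\Omega$'' for $w\in\mathcal F$. This is easily fixed: every $w\in\mathcal F$ is convex, hence subharmonic, so $w$ lies below the harmonic extension of $\varphi|_{\partial\Omega}$, which is continuous up to the boundary since bounded convex domains are regular. Second, the ``local lift'' you invoke requires solving $\det D^2\tilde v = g\,\tilde v^{q}$ on a ball with positive boundary data; this is not the classical Monge--Amp\`ere Dirichlet problem (the right-hand side depends on the unknown), so calling it ``classical non-degenerate'' overstates the case. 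One still needs a short fixed-point or iteration argument, or an approximation $\det D^2 v_\varepsilon = g\max(v_\varepsilon,\varepsilon)^q$ and a passage to the limit, to close this step. With these two clarifications your Perron argument is complete.
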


We will also use the lemma below, which follows from the comparison principle.

\begin{Lemma}[Lemma 2.4 in \cite{jin2025regularity}]\label{lem:volume concidence}
Let $h>0$ be a constant. Suppose the convex function $ w>0$ satisfies \eqref{eq:obs subsoultion} on a convex closed set $O$ with $w \leq h $ on $\partial O$. Then
\[
|O| \leq C(n,q)h^{\frac{n-q}{2}} .
\]
\end{Lemma}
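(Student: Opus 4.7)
The plan is to bound $|O|$ by testing $w$ against an explicit radial supersolution and using John's lemma to convert the resulting one-point contradiction into a volume bound. The scaling of the model equation $\det D^2 u = u^q$ forces the natural exponent $p := 2n/(n-q)$, and a direct eigenvalue computation shows that the radial profile
\[
\overline{u}(x) := A|x|^{p}, \qquad A := \bigl[p^{n}(p-1)\bigr]^{-1/(n-q)},
\]
satisfies $\det D^2 \overline{u} = \overline{u}^{q}\chi_{\{\overline{u}>0\}} \le \overline{u}^{q}$ classically away from $0$ (with no atom at the origin since $\partial \overline{u}(0)=\{0\}$). In particular, $\overline{u}$ qualifies as a supersolution in the sense required by Lemma \ref{lem:comparison principle obs}.

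Next I would normalize the shape of $O$. By John's lemma, there exists a maximal-volume inscribed ellipsoid $E \subset O$ with $O \subset x_0 + n(E - x_0)$, where $x_0$ is its center. Applying a volume-preserving affine map that sends $E$ to a ball $B_r$ centered at the origin, I may assume
\[
B_{r} \subset O \subset B_{nr}.
\]
Such an affine change of variables preserves the Monge-Amp\`ere inequality \eqref{eq:obs subsoultion}, the pointwise values of $w$, and $|O|$, so the reduction is harmless.

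Now I apply Lemma \ref{lem:comparison principle obs} with $\underline{w}=w$ and $\overline{w}=\overline{u}$ on $O$. If $\overline{u}\ge w$ on $\partial O$, then $\overline{u}\ge w$ throughout $O$; evaluating at $0 \in O$ gives $0 = \overline{u}(0) \ge w(0) > 0$, contradicting the hypothesis $w > 0$. Hence some $y\in\partial O$ satisfies $A|y|^{p}=\overline{u}(y)<w(y)\le h$, which forces $|y| < (h/A)^{1/p}$. Since $B_{r}\subset O$ and $y\in\partial O$, we have $r \le |y|$, and together with $O \subset B_{nr}$ this gives
\[
|O| \le \omega_{n}(nr)^{n} \le \omega_n n^n (h/A)^{n/p} = C(n,q)\, h^{\frac{n-q}{2}}.
\]

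The conceptual main step is identifying the correct barrier: the exponent $p = 2n/(n-q)$ is dictated by the homogeneity of $\det D^2 u = u^q$, and $A$ is then pinned down by the eigenvalue identity $\det D^2 \overline{u} = A^{n}p^{n}(p-1)|x|^{n(p-2)} = \overline{u}^{q}$. The affine reduction via John's lemma is essentially forced, since a bound on the inradius of $O$ alone would not control $|O|$ for elongated convex bodies.
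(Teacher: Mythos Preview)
Your argument is correct. Note that the present paper does not supply a proof of this lemma---it is quoted verbatim from the authors' earlier work \cite{jin2025regularity}---so there is no proof here to compare against. Your approach (normalize $O$ by John's lemma, then compare $w$ with the homogeneous barrier $\overline u(x)=A|x|^{2n/(n-q)}$, which solves $\det D^2\overline u=\overline u^{q}$ exactly, and read off a contradiction at the center) is the natural one dictated by the scaling of the equation, and the computation of $p$ and $A$ is accurate. The only implicit assumption worth flagging is that $O$ is bounded with nonempty interior; the case $|\mathring O|=0$ is trivial, and every application of the lemma in this paper is to a bounded set.
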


The structure on $\Gamma_{nsc}$ can be described as follows. 

\begin{Proposition}[Proposition 3.1, Theorem 1.4, Theorem 1.7 in \cite{jin2025regularity}]\label{thm:nsc set}	
Suppose $v$ is a solution to \eqref{eq:obs problem q}. Define $\Sigma_v$ as the union of all convex sets $E\subset\Omega$ such that
\begin{itemize}
\item $v$ is linear on $E$;
\item $(\overline E)^{ext} \subset \partial \Omega$.
\end{itemize}
Then,  
\[
\Gamma_{nsc}= \Sigma_v \cap \Gamma,
\]  
$\Sigma_v$ is relatively closed to $\Omega$, $v \in C^{1,\beta}(\Omega\setminus \Sigma_{v})$ and is strictly convex in $\Omega \setminus (\Sigma_v\cup K)$, where $\beta \in (0,1)$ depends only on $n$, $q$, $\lambda$, and $\Lambda$. 

\end{Proposition}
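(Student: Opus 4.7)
The proposition combines four conclusions: the identification $\Gamma_{nsc}=\Sigma_v\cap\Gamma$, relative closedness of $\Sigma_v$ in $\Omega$, interior $C^{1,\beta}$ regularity of $v$ off $\Sigma_v$, and strict convexity of $v$ off $\Sigma_v\cup K$. My plan is to derive all four from a single structural lemma in the spirit of Caffarelli's rigidity result \cite{caffarelli1993note}, adapted to the degeneracy of the obstacle problem.

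The central step is the structural lemma: \emph{every maximal convex set $E\subset\overline{\Omega}$ on which $v$ is affine satisfies $(\overline{E})^{ext}\subset\partial\Omega$.} There are two cases. If $v>0$ on $E$, I would pick a putative extreme point $x_0\in(\overline{E})^{ext}\cap\Omega$; on a small ball about $x_0$, continuity yields $v\geq c>0$, so $\lambda c^{q}\leq \det D^2 v\leq \Lambda (\sup v)^{q}$, a non-degenerate Monge-Amp\`ere equation, and Caffarelli's theorem from \cite{caffarelli1993note} excludes such an interior extreme point. If instead $v\equiv 0$ on $E$ (so $E\subset K$), a contradiction has to be produced by hand: after subtracting the supporting affine function that vanishes on $E$, one examines the sections $S_h=\{v<h\}$ for $h\downarrow 0$ localized near $x_0$. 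On any such section the subsolution inequality \eqref{eq:obs subsoultion} holds, so Lemma \ref{lem:volume concidence} gives $|S_h|\lesssim h^{(n-q)/2}$, while convex geometry near the extreme point $x_0$ forces a lower bound on $|S_h|$ of strictly larger order as $h\to 0$, a contradiction.

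The remaining conclusions flow from this lemma. For $\Gamma_{nsc}\subset\Sigma_v\cap\Gamma$: a non-exposed point $x\in\Gamma_{nsc}$ lies on some segment $[x,y]\subset K$ with $y\neq x$, and enlarging to the maximal affine component places $x$ in $\Sigma_v$. For the reverse inclusion: if $x\in\Sigma_v\cap\Gamma$ lies on an affine component $E$, then $v(x)=0$, $v\geq 0$, and $x\in\Omega$ cannot be an extreme point of $\overline{E}$ (since those lie on $\partial\Omega$), so the affine function equal to $v$ on $E$ must vanish on an entire segment through $x$, producing another point of $K$ and breaking exposedness. Relative closedness of $\Sigma_v$ is a routine compactness argument: given $x_k\to x\in\Omega$ with $x_k\in\Sigma_v$, extract a Hausdorff subsequential limit of witnessing sets $E_k$ and use continuity of $v$ together with stability of the extreme-point condition on $\partial\Omega$ under the limit.

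Finally, for the regularity statements: at $x\in\Omega\setminus(\Sigma_v\cup K)$ the structural lemma forbids any affine piece through $x$, so $v$ is strictly convex at $x$; combined with local positivity of $v$, this makes the equation non-degenerate, and Caffarelli's $C^{1,\alpha}$ theorem for solutions of \eqref{eq:mae classical} yields $v\in C^{1,\beta}$ locally. On the interior of $K$, $v\equiv 0$ is smooth; and at points $x\in\Gamma_{sc}=\Gamma\setminus\Gamma_{nsc}$, the one-sided $C^{1,\beta}$ regularity up to the free boundary was already established in \cite{jin2025regularity}. The main obstacle in the whole plan is the $E\subset K$ case of the structural lemma, since the equation degenerates on $E$ and Caffarelli's theorem cannot be applied directly; the right remedy is the quantitative volume/height estimate in Lemma \ref{lem:volume concidence}, which controls precisely how steeply $v$ must rise off a flat piece of the coincidence set and thereby prevents an interior cusp of $K$.
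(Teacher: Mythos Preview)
First, note that this proposition is not proved in the present paper; it is quoted from \cite{jin2025regularity}, so your proposal must stand on its own.

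Your central ``structural lemma'' --- that every maximal convex set on which $v$ is affine has its extreme points on $\partial\Omega$ --- is false. When $|K|>0$, the coincidence set $K$ itself is such a maximal affine piece ($v\equiv 0$ there, $v>0$ elsewhere), yet it typically has extreme points in $\Omega$: these are precisely what $\Gamma_{sc}$ records (take $q=0$ with $K\subset\subset\Omega$, or the examples of Proposition \ref{prop:sing on k skeleton}). Your volume argument cannot manufacture a contradiction here, because Lemma \ref{lem:volume concidence} requires $w>0$ on $O$, while $v$ vanishes on $K\subset S_h$; indeed $|S_h|\ge|K|>0$ does not decay as $h\to 0$. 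This also breaks your derivation of $\Gamma_{nsc}\subset\Sigma_v$: ``enlarging a segment in $K$ to the maximal affine component'' returns $K$, not a set whose extreme points lie on $\partial\Omega$.

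The repair is to separate the zero supporting plane from the tilted ones. One should prove that for every \emph{nonzero} $p\in\partial v(x_0)$ the contact set $A_p=\{v=p\cdot(x-x_0)\}$ has $(\overline{A_p})^{ext}\subset\partial\Omega$; on the half-space $\{p\cdot(x-x_0)>0\}$ one has $v>0$, and a Caffarelli-type argument goes through once strict positivity is restored via the tilting device $\tilde v_\varepsilon=v-(1-\varepsilon)p\cdot(x-x_0)$ (exactly as in the proof of Theorem \ref{thm:dim of E}) before invoking Lemma \ref{lem:volume concidence}. The inclusion $\Gamma_{nsc}\subset\Sigma_v$ is then obtained by associating to each nontrivial exposed face $F=K\cap H$ a nonzero $p$ normal to $H$ with $F\subset A_p$ --- not by passing to $K$. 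Verifying that such a nonzero $p$ actually lies in $\partial v(x_0)$ for $x_0\in\Gamma_{nsc}$ is itself nontrivial and ties back to the growth estimates of \cite{jin2025regularity}.
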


\begin{Remark}\label{rem:dichotomy for Ktype}
Outside $K$, the set $\Sigma_v$ is the set of non-strictly convex points of $v$. Based on Proposition \ref{thm:nsc set}, we can further characterize the free boundary $\Gamma_{nsc}:=\Gamma \setminus \Gamma_{sc}  $ as follows:\footnote{For the sake of simplicity, we define an exposed face as the intersection of  
$K$ with one of its supporting hyperplane in $\R^n$. Additionally, we regard $K$ itself as an exposed face of $K$ when $|K| = 0$.} 
\begin{equation}\label{eq:definition of gammansc}
\Gamma_{nsc} :=  \bigcup \{E : E  \text{ is a non-trivial exposed face of } K \}.
\end{equation}

The study of $v$ and $\partial K$ is usually divided into two cases: 
\begin{itemize}
\item[(i)] $|K|>0$; 
\item[(ii)] $|K| = 0$, from which it follows that either $K=\Gamma_{nsc}$ or $K$ is a singleton. 
\end{itemize}
\end{Remark}

\subsection{Dimension estimates in Theorem \ref{thm:dim of E}}
\label{sec:dim_analysis}
\begin{proof}[Proof of Theorem \ref{thm:dim of E}.]
For simplicity, let us assume that the mass center of $E$ is at $0$, and choose $p \in \partial v(0)$ that achieves 
\[
\sup \left\{ |p | :\; p \in \partial v(0) \right\}.
\]
After an affine transformation, we may further assume that  $p=|p|e_1$, $K\subset \left\{x_1 \leq  0\right\}  $,  and $E \subset \left\{ x_1=0\right\}$. 
Then, we shall consider the function $\tilde{v}:=v -|p| x_1$.

For every sufficiently small $\varepsilon>0$,  in the domain $\{x_1 > 0\}$, the function $\tilde{v}_{\varepsilon} := v - (1-\varepsilon)|p|x_1$ is positive and satisfies:
\[
\det D^2 \tilde{v}_{\varepsilon}= \det D^2  v =v^q\chi_{\{ v > 0\}}  \geq  \tilde{v}_{\varepsilon}^q.
\]
Lemma \ref{lem:volume concidence} now implies that
\[
|S_h^{\tilde{v}_{\varepsilon}} \cap \{ x_1 > 0 \}| \leq C(n, q) h^{\frac{n - q}{2}}, \quad \forall h>0,
\]
where $S_h^{\tilde{v}_{\varepsilon}} := \{x:\;  \tilde{v}_{\varepsilon}(x) < h \}$.
Sending $\varepsilon \to 0$, the corresponding set $S_h^{\tilde{v}} := \{x:\; \tilde{v}(x) < h \}$ then satisfies
\[
|S_h^{\tilde{v}} \cap \{ x_1 > 0 \}| \leq C(n, q) h^{\frac{n - q}{2}}, \quad \forall h>0.
\]
Note that we did not apply Lemma \ref{lem:volume concidence} directly to $\tilde v$  since $\tilde v$ may not be positive in $\{ x_1 > 0 \}$.

For small $h > 0$, the Lipschitz regularity of $\tilde{v}$ implies that the convex hull
\[
\operatorname{conv}\left\{E, c_{v}B_{h}(0) \right\} \subset S_h^{\tilde{v}}
\]
for some $c_v>0$ independent of $h$.
By the definitions of $\tilde{v}$ and $K$, $\tilde{v}$ is $C^1$ along the direction of $e_1$ at $0$. Suppose $\dim  (E\cap \Omega)=k$, then we have
\[
\omega(h) h^{n-k-1} |E\cap \Omega|_{\mathcal{H}^k}   \leq    |S_h^{\tilde{v}} \cap \{ x_1 > 0 \}| \leq C(n,q)   h^{\frac{n - q}{2}} ,
\]
where $\omega(h)$ is a quantity satisfying $\frac{\omega(h)}{h} \to +\infty$ as $h\to 0$ and $\mathcal{H}^k$ denotes the $k$-dimensional Hausdorff measure.
Sending $h \to 0$, we obtain \eqref{eq:dim estimate lip}.

Suppose in addition that \eqref{eq:pointwise regular on bd} holds at some $x_0 \in \overline{E} \cap \partial \Omega$. Since the ray $\{ (1-t)x_0  :\;t > 0\}$ is oblique at $x_0$, we obtain the improved estimate:
\[
S_h^v \supset \operatorname{conv}\left\{E, c_{v,E}B_{h^{\frac{1}{s}}}(0) \cap\partial\Omega  \right\}
\]
for some $c_{v,E}>0$ independent of $h$. Therefore,
\[
   (c_{v, E} h^{\frac{1}{s}})^{n-k} |E\cap \Omega|_{\mathcal{H}^k} \leq  |S_h^v \cap \left\{ x_1>0\right\}|\leq  C(n,q)h^{\frac{n-q}{2}}. 
\]
By letting $h \to 0$, we obtain $\frac{n-k}{s} \geq \frac{n-q}{2}$, which yields \eqref{eq:dim estimate c1}.
\end{proof}

\begin{Remark}
If  $v$ is a solution to \eqref{eq:obs problem q}, then Proposition \ref{thm:nsc set} implies that $\Gamma_{nsc}$ emanates from $\partial\Omega$.
Consequently, in the spirit of Corollary 4 in \cite{caffarelli1990ilocalization}, Theorem \ref{thm:dim of E} further establishes that if $s>\frac{2(n-1)}{n-q}$, then $\Gamma_{nsc} = \emptyset$.  
\end{Remark}

\subsection{Examples}\label{sec:examples}

We recall the equation (5.5.2) in \cite{gutierrez2016monge}:

\begin{Lemma}[Equation (5.5.2) in \cite{gutierrez2016monge}]\label{lem:determinat formula}
Let $n\ge 3$, $x\in\R^n$  and set $x=(y,z)$; $y=(y_1,\cdots ,y_{n-k})$; $z=(z_{1},\cdots ,z_{k})$; $\rho=|y|=(\sum_{i=1}^{n-k}y_i^2)^{1/2}$; $r=|z|=(\sum_{i=1}^{k}z_i^2)^{1/2}$; $u(\rho,r)$ a function of two variables and
\[
w(x)=w(y,z)=u(|y|,|z|) .
\]
We have 
\[
\det D^2 w =\left(\frac{u_{\rho}}{\rho}\right)^{n-k-1}\left(\frac{u_{r}}{r}\right)^{k-1}(u_{\rho \rho}u_{rr}-u_{\rho r}^2).
\]
\end{Lemma}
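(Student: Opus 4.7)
The plan is to reduce the computation of $\det D^2 w$ to a convenient point using the rotational symmetry of $w$ in the $y$- and $z$-variables separately. First I would differentiate: by the chain rule,
\[
w_{y_i} = u_\rho \frac{y_i}{\rho}, \qquad w_{z_j} = u_r \frac{z_j}{r},
\]
and then
\[
w_{y_i y_j} = u_{\rho\rho}\frac{y_i y_j}{\rho^2} + \frac{u_\rho}{\rho}\Bigl(\delta_{ij} - \frac{y_i y_j}{\rho^2}\Bigr),\ \  w_{z_i z_j} = u_{rr}\frac{z_i z_j}{r^2} + \frac{u_r}{r}\Bigl(\delta_{ij} - \frac{z_i z_j}{r^2}\Bigr),
\]
with the mixed block $w_{y_i z_j} = u_{\rho r}\, y_i z_j /(\rho r)$.

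Since $w$ depends only on $|y|$ and $|z|$, it is invariant under any orthogonal transformation $R_1 \oplus R_2$ acting separately on the two blocks, and such transformations preserve $\det D^2 w$. Hence I may evaluate the determinant at the specific point where $y = \rho\, e_1^{(y)}$ and $z = r\, e_1^{(z)}$. At that point, $y_i y_j/\rho^2 = \delta_{i1}\delta_{j1}$ and similarly for $z$, so the Hessian becomes block-diagonal after reordering: in the two-dimensional block spanned by the first $y$-direction and the first $z$-direction it is
\[
\begin{pmatrix} u_{\rho\rho} & u_{\rho r} \\ u_{\rho r} & u_{rr} \end{pmatrix},
\]
while the remaining $y$-directions contribute the scalar $u_\rho/\rho$ each (giving $n-k-1$ such eigenvalues) and the remaining $z$-directions contribute $u_r/r$ each (giving $k-1$ such eigenvalues), with all cross terms vanishing because they carry a factor $y_i$ or $z_j$ with $i,j\ge 2$.

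Multiplying these eigenvalues together yields
\[
\det D^2 w = \Bigl(\frac{u_\rho}{\rho}\Bigr)^{n-k-1}\Bigl(\frac{u_r}{r}\Bigr)^{k-1}\bigl(u_{\rho\rho}u_{rr} - u_{\rho r}^2\bigr)
\]
at the chosen point, and by the rotational invariance noted above the same identity holds at every $x = (y,z)$ with $y,z\neq 0$. No step here is genuinely difficult; the only mild care is to justify the reduction to the special point and to check that the off-diagonal entries involving $y_i z_j$ really vanish for $i,j \ge 2$ after the rotation, which is immediate from the formulas above.
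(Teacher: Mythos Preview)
Your proof is correct. The paper does not give its own proof of this lemma but simply cites it from Guti\'errez's book; in the subsequent applications (the proofs of Theorem~\ref{thm:dim optimize2} and Proposition~\ref{prop:cylinder K}) the paper in fact displays the Hessian $D^2 w$ at the special point $x=\rho e_{n-k}+r e_n$, which is precisely your reduction via the $O(n-k)\times O(k)$ symmetry.
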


\begin{proof}[Proof of Theorem \ref{thm:dim optimize2}]
Let us consider the coordinates in Lemma \ref{lem:determinat formula}. Set $f(r)=\left(1+\frac{1}{2}r^2\right)$, and 
\[
w\left(y, z\right)= u(|y|,|z|).
\]
To streamline our analysis, we will construct solutions through appropriately chosen subsolutions. The explicit construction in the special case $q=0$ can be found in \cite{caffarelli2022singular}.

{\bf (i): The case of $s=1$.}  Let us consider
\[
  u(\rho,r)=\rho+\rho^{\beta}f(r),\quad \beta=\frac{n-k+1+q}{k+1}.
\] 
The assumption $k <\frac{n+q}{2}$ ensures that $\beta >1$. Since the Hessian $D^2 w$ at the point $x=\rho e_{n-k}+re_{n}$ takes the form
\[
\left[\begin{array}{cccc}
\frac{1+\beta \rho^{\beta-1} f}{\rho}  \I_{n-k-1} & 0 & 0 & 0\\
0 & \beta(\beta -1)\rho^{\beta-2}f  & 0 & \beta \rho^{\beta-1} r\\
0 & 0 & \rho^{\beta} \I_{k-1}  & 0\\
0 & \beta \rho^{\beta-1}r  & 0 & \rho^{\beta}
\end{array}\right],
\]
then $w$ is a convex function in $\left\{ r \leq \tau\right\}$ if $\tau>0$ is sufficiently small, and satisfies (see Lemma \ref{lem:determinat formula})
\[
\det D^2 w
=\rho^q\left(1+\beta \rho^{\beta-1} f\right)^{n-k-1}
\left[\beta(\beta -1)f - \beta^2 r^2\right] \ge cw^q   \quad \text{in } B_\tau(0) 
\]
in the Aleksandrov sense for some positive constant $c$. Let $v \geq 0 $ be a solution of
\[
\det  D^2 v=v^q \chi_{\{v>0\}}\quad \text{in } B_\tau(0), \quad v = c^{\frac{1}{q-n}}w \quad \text{on } \partial B_\tau (0).
\]
By combining the comparison principle with the convexity of $v$, it follows that 
\[
c^{\frac{1}{q-n}}|y| \leq c^{\frac{1}{q-n}}w(y,z) \leq v\left(y, z\right) \leq C|y|.
\]
It is straightforward to verify that   $\tau^{-\frac{2n}{n-q}}v(\tau x)$ meets our requirements.

{\bf (ii): The case of $s >1$.} Let us consider
\[
u\left(\rho, r\right)=\rho^{s}+\rho^{\gamma}f(r),\quad \gamma=\frac{n-k+q+(k-n+q){s}}{k}.
\]
The assumption $s\leq \frac{2n-2k}{n-q}$ ensures that $\gamma\geq s$. Since the Hessian $D^2 w$ at the point $x=\rho e_{n-k}+re_{n}$ takes the form
\[
\left[\begin{array}{cccc}
\frac{s\rho^{s-1}+\gamma \rho^{\gamma-1} f}{\rho}  \I_{n-k-1} & 0 & 0 & 0\\
0 & s(s-1)\rho^{s-2}+\gamma(\gamma -1)\rho^{\gamma-2}f  & 0 & \gamma \rho^{\gamma-1} r\\
0 & 0 & \rho^{\gamma} \I_{k-1}  & 0\\
0 & \gamma \rho^{\gamma-1}r  & 0 & \rho^{\gamma}
\end{array}\right],
\]
then $w$ is a convex function in $\left\{ r \leq \tau\right\}$ if $\tau>0$ is sufficiently small, and satisfies (see Lemma \ref{lem:determinat formula})
\[
\begin{split}
\det D^2 w
=\rho^{q{s}}\left({s}+\gamma \rho^{\gamma-{s}} f\right)^{n-k-1}
\left[ {s}({s}-1)+\gamma(\gamma -1)\rho^{\gamma-{s}}f 
- \gamma^2\rho^{\gamma-{s}} r^2\right].
\end{split}
\]
With this subsolution, the same argument as that for case (i) produces the desired solutions.
\end{proof}

Mooney \cite{mooney2015partial} established that every subsolution $w$ of \eqref{eq:mae classical} satisfies $\mathcal{H}^{n-1}(\Sigma_w) = 0$. He also proved the $W^{2,1}$ regularity for solutions $v$ of \eqref{eq:mae classical}, even in the case where $\Sigma_v$ contains uncountably many singular faces (Note that if the solutions of \eqref{eq:mae classical} are strictly convex, then they are $W^{2,1+\varepsilon}$ as shown in \cite{dephilippis2013w21regularity, dephilippis2013note,schmidt2013w21estimates}, which is the best possible due to the counterexample in \cite{wang1996regularity}). Mooney \cite{mooney2016counterexamples} further pointed out that the $W^{2,1}$ regularity may fail for supersolutions due to their degeneracy (see also \cite{dephilippis2024singular}). In light of the degeneracy for  \eqref{eq:obs eq q g=1}, we have the following analogous result:

\begin{Proposition}\label{prop:cylinder K}
Let $q > 0$. There exist a solution of \eqref{eq:obs eq q g=1} such that   $ K = \{|x'| \leq 1/2\}\cap B_{1}(0)$, $\Gamma_{nsc} = \{|x'| = 1/2\}\cap B_{1}(0)$ form an $(n-1)$-dimensional cylindrical hypersurface, where $x'=(x_1,\cdots,x_{n-1})$, and the solution is merely Lipschitz continuous and does not belong to $W^{2,1}$ in a neighborhood of $K$.
\end{Proposition}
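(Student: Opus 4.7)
The plan is to construct $v$ by sandwiching it between an explicit Lipschitz convex subsolution $w$ of $\det D^2 w=w^q$ whose zero set is exactly the cylinder $\{|x'|\le 1/2\}\cap B_1(0)$ and which vanishes linearly (not $C^1$) at the free boundary, and an explicit convex supersolution $\overline v$ vanishing on the same cylinder. Lemma \ref{lem:existence and uniqueness} with boundary data $\varphi=w|_{\partial B_1}$ (extended as the convex function $w$ on $\overline{B_1}$) gives a solution $v$, and two applications of Lemma \ref{lem:comparison principle obs} -- to $(w,v)$ and to $(v,\overline v)$ -- will yield $w\le v\le\overline v$ in $B_1$, from which all claimed properties of $v$ follow.

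With $\beta:=1+q/2>1$ (using $q>0$), parameters $\lambda\in(0,1)$ small and $B>0$ large to be chosen, $r:=|x'|$, and $s:=(r-1/2)_+$, define
\[
w(x):=s+Bs^\beta\cosh(\lambda x_n).
\]
The first term is convex (composition of the convex $|x'|$ with the convex nondecreasing $(t-1/2)_+$) and Lipschitz with a kink along $\{r=1/2\}$; the second is $C^1$ with vanishing gradient across $\{r=1/2\}$ since $\beta>1$, and a direct radial computation in $x'$ shows that on $\{r>1/2\}$ its $(r,x_n)$-block Hessian has determinant $B^2\beta\lambda^2 s^{2\beta-2}[(\beta-1)-\sinh^2(\lambda x_n)]$, which is nonnegative whenever $\sinh^2\lambda\le\beta-1$; hence $w$ is convex on $\R^n$ for $\lambda$ small. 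The same radial computation gives, on $\{r>1/2\}$,
\[
\det D^2 w=\left(\frac{1+B\beta s^{\beta-1}\cosh(\lambda x_n)}{r}\right)^{n-2}B^2\beta\lambda^2\, s^{q}\bigl[(\beta-1)-\sinh^2(\lambda x_n)\bigr],
\]
while $w^q=s^{q}(1+Bs^{\beta-1}\cosh(\lambda x_n))^{q}$. Since $|x_n|<1$ on $B_1$ and $q<n$, choosing $\lambda$ small (so $\sinh^2(\lambda x_n)\le(\beta-1)/2$) and then $B$ large (so the factor $B^2(1+B\beta s^{\beta-1})^{n-2}$ dominates $(1+Bs^{\beta-1})^{q}$ uniformly for $s\in(0,1/2]$) delivers $\det D^2 w\ge w^q$ throughout $B_1\cap\{r>1/2\}$.

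For the supersolution, set $\overline v(x):=C(r^2-1/4)_+$ with $C$ large enough that $\overline v\ge w$ on $\partial B_1$. It is convex, vanishes on $\{r\le 1/2\}$, and has $e_n$ as a null eigenvector, so its Monge-Amp\`ere measure vanishes and $\det D^2\overline v\equiv 0\le\overline v^q$ in the Aleksandrov sense. Lemma \ref{lem:existence and uniqueness} produces the unique convex nonnegative solution $v$ with $v|_{\partial B_1}=w|_{\partial B_1}$, and the two comparisons yield $w\le v\le\overline v$. Consequently $v\equiv 0$ on $\{r\le 1/2\}\cap B_1$ and $v>0$ on $\{r>1/2\}\cap B_1$, so $K=\{r\le 1/2\}\cap B_1$ and $\Gamma=B_1\cap\partial K=\{r=1/2\}\cap B_1$; each point of $\Gamma$ lies on a nontrivial $e_n$-segment inside $K$, hence is not an exposed point of $K$, so $\Gamma_{nsc}=\Gamma$.

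Finally, the sandwich yields $c_0(r-1/2)_+\le v\le C_0(r-1/2)_+$ near $\{r=1/2\}$, so $v$ is merely Lipschitz and the derivative $\partial_r v$ jumps by at least $c_0>0$ across $\{r=1/2\}$ on every $x_n$-slice. If $v$ belonged to $W^{2,1}$ on a neighborhood of $K$, then by a Fubini/slicing argument $v(\cdot,x_n)$ would lie in $W^{2,1}$ on a.e.\ slice, forcing $\partial_r v(\cdot,x_n)$ to be continuous (since $W^{1,1}\subset C^0$ in one dimension) -- incompatible with a jump of size $\ge c_0$. I expect the main technical obstacle to be the global verification of $\det D^2 w\ge w^q$ on $B_1\cap\{r>1/2\}$, where the choice of $\lambda$ (small, both for convexity of $w$ and for $(\beta-1)-\sinh^2(\lambda x_n)>0$) competes with the choice of $B$ (large, to beat the right-hand side), with the constraint $q<n$ providing precisely the room needed.
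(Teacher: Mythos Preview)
Your proposal is correct and follows essentially the same strategy as the paper: build a subsolution of the form $d+d^\beta f(x_n)$ with $d=(|x'|-\tfrac12)_+$ and $\beta=1+q/2$ (you take $f=B\cosh(\lambda x_n)$, the paper takes $f=1+\tfrac12 x_n^2$), sandwich the solution between it and a trivial supersolution via the comparison principle, and read off $K$, $\Gamma_{nsc}$, the mere Lipschitz regularity, and the $W^{2,1}$ failure from the two-sided linear bound in $d$. The minor differences---your scaling parameters $\lambda,B$ to work directly on $B_1$ in place of the paper's restriction to a thin slab followed by rescaling, and your slicing argument for the $W^{2,1}$ failure in place of the paper's observation that $\Delta v$ is not integrable---are inessential variations on the same idea.
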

 \begin{proof}
Let us consider the coordinates as in Lemma \ref{lem:determinat formula} with $k=1$.
Let 
\[
\rho=|x'|,\quad r=|x_{n}|,\quad f(r)=1+\frac{1}{2}r^2
\]
and
\[
d  :=d(\rho )= \max \left\{\rho-\frac{1}{2},0 \right\}  \quad \text{ so that } \operatorname{dist}(x,K)=d(|x'|) .
\]
Let us consider the following function
\[
w\left(x', x_n\right)=u(\rho,r)=d + d^{s}f(r), \quad 
s=\frac{q+2}{2}.
\]
The assumption $q>0$ ensures $s>1$. Since the Hessian $D^2 w$ at the point $x=\rho e_{n-1}+re_n,\rho >1$ takes the form
\[
\left[\begin{array}{ccc}
\frac{1+s d^{s-1}f}{\rho} \I_{n-2} & 0 & 0\\
0 & s(s-1) d^{s-2}f  & s d^{s-1}r\\
0 & s d^{s-1}  r  & d^{s}
\end{array}\right],
\]
then $w$ is a convex function in $\left\{ r \leq \tau\right\}$ if $\tau>0$ is sufficiently small, and satisfies (see Lemma \ref{lem:determinat formula})
\[
\det D^2 w
= \left(\frac{1+s d^{s-1}f }{\rho}\right)^{n-2} 
d^{2s-2}\left[s(s-1)  
- s^2 r^2\right] \geq cw^{q} .
\]
With this subsolution, the same argument as that for Theorem \ref{thm:dim optimize2} produces the desired solutions. Indeed, observing the Lipschitz growth of $v$ around $\{|x'|=\frac{1}{2}\}$,  we see that $\Delta v$ fails to be integrable. Consequently, $v \notin W^{2,1}_{loc}$ around $K$.
 \end{proof}

Next, we are going to use the examples in Theorem \ref{thm:dim optimize2} to construct new examples that prove Theorem \ref{thm:dim optimize}.

\begin{Proposition}\label{prop:sing on k skeleton}
Let $n+q >2$ and  $k=\left\lceil\frac{n+q}{2}\right\rceil - 1$. For each compact convex polytope $P \subset \R^n$ with $|P| > 0$,  there exist a domain $\Omega \supset P$ and a non-trivial solution $v$ to \eqref{eq:obs eq q g=1} in $\Omega $ satisfying
\[
(P\setminus\Gamma_0) \subset \{x \in \Omega:\; v(x) = 0\} , \quad \Gamma_{nsc} = \Gamma_{k} \cap \Omega,
\]
and the free boundary $\Gamma$ is merely Lipschitz continuity at $\Gamma_{k}$, where $\Gamma_{j}$ denote  the $j$-skeleton\footnote{The $j$-skeleton of $P$, denoted by $\Gamma_j(P)$ or  $\Gamma_j$, is the union of all $j$-dimensional (exposed) faces of $P$.} of $P$.
\end{Proposition}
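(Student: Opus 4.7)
The plan is to apply Perron's method (Lemma \ref{lem:existence and uniqueness}) with a carefully chosen boundary datum on a small convex enlargement $\Omega$ of $P$, and then pin down the behavior of the resulting solution near $P$ by comparing with two kinds of barriers: a piecewise-affine upper barrier built from the facet structure of $P$, and lower barriers given by the $s=1$ subsolutions of Theorem \ref{thm:dim optimize2} placed near each $k$-face of $P$.

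Concretely, I would take $\Omega = P + B_\delta(0)$ for a sufficiently small $\delta > 0$, write $P = \bigcap_{j=1}^{N} \{x : \nu_j \cdot x \leq c_j\}$ via its defining half-spaces, and set
\[
\Phi(x) := c_0 \max_{1 \leq j \leq N}\,(\nu_j \cdot x - c_j)_+
\]
for a small constant $c_0 > 0$. Then $\Phi$ is convex, non-negative, and vanishes exactly on $P$; since it is piecewise affine, its Aleksandrov Monge-Amp\`ere measure is supported on a Lebesgue-null set, so $\Phi$ is a supersolution of \eqref{eq:obs eq q g=1}. Let $v$ denote the unique convex solution with $v|_{\partial\Omega} = \Phi|_{\partial\Omega}$ given by Lemma \ref{lem:existence and uniqueness}. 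The comparison principle (Lemma \ref{lem:comparison principle obs}) yields $v \leq \Phi$, whence $v \equiv 0$ on $P$ and therefore $(P \setminus \Gamma_0) \subset K$.

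To see $\Gamma_k \cap \Omega \subset \Gamma_{nsc}$ with merely Lipschitz regularity of $\Gamma$, fix any $k$-face $F$ of $P$ and a relative interior point $x_0 \in F$. In a small ball $B_\tau(x_0) \subset \Omega$ the face $F$ appears as a $k$-dimensional affine slice through $x_0$; after an affine change of variables aligning this slice with $\mathbb{R}^k$ in the notation of Theorem \ref{thm:dim optimize2}, the Lipschitz subsolution from case (i) of that theorem (suitably translated and rescaled) satisfies $w \leq v$ on $\partial B_\tau(x_0)$ provided $c_0$ was chosen small enough, and Lemma \ref{lem:comparison principle obs} then gives $w \leq v$ throughout $B_\tau(x_0)$. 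Since $w$ exhibits only Lipschitz (and not $C^1$) growth away from $\mathbb{R}^k$, the solution $v$ fails to be $C^1$ at $x_0$, so $F \cap \Omega \subset \Gamma_{nsc}$ and $\Gamma$ is merely Lipschitz at $x_0$.

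For the reverse inclusion $\Gamma_{nsc} \subset \Gamma_k \cap \Omega$, the main input is the dimension estimate of Theorem \ref{thm:dim of E}: any convex subset of $\Gamma_{nsc}$ has dimension strictly less than $(n+q)/2$, hence at most $k$. Combined with Remark \ref{rem:dichotomy for Ktype}, this rules out any exposed face of $K$ of dimension greater than $k$, so no $j$-face of $P$ with $j > k$ can remain flat on $\partial K$; this forces $K$ to bulge strictly beyond $P$ near each such higher-dimensional face, and the bulges are strictly convex, hence smooth, pieces of $\Gamma$ by Proposition \ref{thm:nsc set}. The principal technical obstacle I foresee is ruling out spurious flat pieces of intermediate dimension ($\le k$, not already contained in the closures of the $k$-faces of $P$): handling this likely requires introducing an additional upper barrier localized near the relative interior of each lower-dimensional face of $P$, engineered so that $v$ is strictly positive just outside $P$ there, thereby pinning $\Gamma_{nsc}$ down to exactly $\Gamma_k \cap \Omega$.
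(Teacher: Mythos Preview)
Your construction has a structural gap that makes the conclusion impossible, not merely technical. With $\Omega = P + B_\delta(0)$ and boundary datum $\Phi|_{\partial\Omega}$, every point of $\partial\Omega$ lies strictly outside $P$, so $\Phi>0$ on $\partial\Omega$ and hence $K\subset\subset\Omega$. But Proposition~\ref{thm:nsc set} forces every non-trivial exposed face of $K$ to have its extreme points on $\partial\Omega$; since $K$ never meets $\partial\Omega$, this yields $\Gamma_{nsc}=\emptyset$ and $\partial K$ strictly convex. Thus no choice of $c_0$ can produce a Lipschitz singularity along $\Gamma_k$. Concretely, your step~4 fails: the Theorem~\ref{thm:dim optimize2} subsolution $w$ vanishes only on the $k$-plane through $F$, whereas $v$ vanishes on all of $P$; on $\partial B_\tau(x_0)\cap\bigl(P\setminus\operatorname{aff}(F)\bigr)$ one has $w>0=v$, so $w\le v$ cannot hold there and the comparison collapses.

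The paper addresses both points simultaneously by reversing the roles of sub- and supersolution and by choosing $\Omega$ so that the vertex set $\Gamma_0$ lies \emph{on} $\partial\Omega$. For each $k$-face $E_i$ it takes the Lipschitz solution $\Phi_i$ from Theorem~\ref{thm:dim optimize2} (zero set the $k$-plane of $E_i$) and tilts it by a large multiple of a supporting affine function $\ell_i$ of $E_i$, so that $\Phi_i+M_1\ell_i\le 0$ on all of $P$; the function $w=M_2\max_i\{(\Phi_i+M_1\ell_i)_+\}$ is then a global \emph{sub}solution with $P\subset\{w=0\}$ and genuine linear growth at each $E_i$. Solving with $v=w$ on $\partial\Omega$ gives $v\ge w$ (hence the Lipschitz singularity), while $P\subset K$ comes for free from convexity since $v$ vanishes at the vertices $\Gamma_0\subset\partial\Omega$. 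The choice of $\Omega$ with $\bigcap_i\{\ell_i\le 0\}\cap\partial\Omega=\Gamma_0$ then pins $\overline K\cap\partial\Omega=\Gamma_0$, so every exposed face of $K$ is an exposed face of $P$; combined with Theorem~\ref{thm:dim of E} this gives $\Gamma_{nsc}\subset\Gamma_k$ directly, without the auxiliary barriers you anticipated.
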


\begin{proof} 
Let us denote all $k$-dimensional (exposed) faces of $P$ as $E_1, \dots, E_m$. For each $E_i$, there exists a linear function $\ell_i$ such that
\[
E_i = P \cap \{\ell_i = 0\}, \quad P \subset \{\ell_i \leq 0\}.
\]
Furthermore, let us take a larger compact convex domain $\Omega$ satisfying
\[
\bigcap_{i=1}^m \{\ell_i \leq 0\} \subset \Omega, \quad \bigcap_{i=1}^m \{\ell_i \leq 0\} \cap \partial \Omega = \Gamma_0.
\]
Since $\Omega$ is compact, after an appropriate affine transformation, we may assume, without loss of generality, that $\Omega \subset B_{1/4}(0)$.

Let $\Phi$ denote the merely Lipschitz solution of \eqref{eq:obs eq q g=1} constructed in proof of Theorem \ref{thm:dim optimize2} (corresponding to the case $s=1$ there) such that $\{\Phi = 0\} = B_1(0) \cap \mathbb{R}^k$ and $\Phi$ is of linear growth at $\{\Phi = 0\}$. 
For each $1\leq i\leq m$,
by applying a rotation and translation to $\Phi$, we can relocate its zero set to the plane containing $E_i$, through which we obtain a function defined on $B_{1/2}(0)$, denoted as $\Phi_i$. 
Making use of the linear growth of $\Phi_i$ around its coincidence set, by taking $M_1$ sufficiently large, we obtain 
\[
\det D^2 (\Phi_i+M_1\ell_i)= \Phi_i^q \chi_{\{\Phi_i>0\}}\geq c_{P,M_1} \max\{\Phi_i+M_1\ell_i,0\}^q  ,
\]
and
\[
P\subset \{\Phi_i+M_1\ell_i \leq 0\} \subset \{ \ell_i \leq 0\}.
\]
Since $q < n$, for sufficiently large $M_2$, the nonnegative convex function 
\[
w =M_2\max\left\{ \Phi_1+M_1\ell_1,\cdots, \Phi_m+M_1\ell_m, 0\right\}.
\]
is a subsolution of \eqref{eq:obs eq q g=1} satisfying $\det  D^2 w\le w^q\chi_{\{ w > 0\}}$ in $\Omega$.

Let $v\ge 0$ be the solution to
\[
\det  D^2 v=v^q\chi_{\{ v > 0\}} \quad \text{in } \Omega , \quad v = w \quad \text{on } \partial \Omega .
\] 
By invoking the comparison principle, we deduce that $v\geq w$. Thus, $v$ is merely Lipschitz continuous at $\Gamma_k$.
Let  $K:=\left\{ v=0\right\}$, and let 
\[
F:= \bigcap_{i=1}^m \{\Phi_i+M_1\ell_i \leq 0\} .
\]
Then, 
\begin{equation}\label{eq:subset}
P \subset K \subset \left\{ w=0\right\} \subset F \subset \bigcap_{i=1}^m \{\ell_i \leq 0\} \subset   \Omega.
\end{equation}
Since $\partial F$ is merely Lipschitz at each $E_i$, this implies that $\partial K$ is merely Lipschitz  at each $E_i$ as well. 
Since $\bigcap_{i=1}^m \{\ell_i \leq 0\} \cap \partial\Omega = \Gamma_0$, the relation \eqref{eq:subset} further shows that each $E_i$ is an exposed face of $K$. It then follows from \eqref{eq:definition of gammansc} that
\[
\Gamma_k := \bigcup_{i=1}^m E_i \subset \Gamma_{nsc}.
\]
On the other hand, since $\partial K\cap\partial\Omega=\Gamma_0$, Proposition \ref{thm:nsc set} and Theorem \ref{thm:dim of E} imply that $\Gamma_{nsc} \subset \Gamma_{k} $. In conclusion,
\[
\Gamma_{nsc}=\Gamma_{k} .
\]
\end{proof}

\begin{Remark}
From the construction in the proof of Proposition \ref{prop:sing on k skeleton}, we can further  see that 
\begin{itemize}
\item For $n \geq \max(3,q+2)$, then $k<n-1$, and therefore, $P\subsetneq K$. This means that the free boundary $\Gamma$ simultaneously contains both $\Gamma_{sc}$ and $\Gamma_{nsc}$.
\item For $q > n-2$, the maximal dimensional case $k = n-1$ occurs, resulting in $K = P$ (the entire polytope). This means that $\Gamma=\Gamma_{nsc}$.
\end{itemize} 
\end{Remark}

\begin{proof}[Proof of Theorem \ref{thm:dim optimize}]
It follows from Proposition \ref{prop:sing on k skeleton}.
\end{proof}

\section{Strict inclusion and stability of coincidence sets}\label{sec:smp}
In this section, we investigate the strong maximum principle and a stability property for the Monge-Amp\`ere obstacle problem \eqref{eq:obs problem q}. In this section, we always assume that $g(x)$ is a bounded and strictly positive function in $\Omega$.

\subsection{Strict inclusion of coincidence sets}
In \cite{jian2025strong},  Jian and the second author studied the strong maximum principle for the classical Monge-Amp\`ere equation \eqref{eq:mae classical}, 
demonstrating that $\Sigma_w$, the set of non-strictly convex points, constitutes the fundamental obstruction to the local validity of this principle.

\begin{Theorem}[Theorem 1.2 in \cite{jian2025strong}]\label{thm:smp nece}
Let $v$ be a convex solution to \eqref{eq:mae classical}. For any line segment $L \subset \Sigma_v$ and any point $x_0 \in \mathring{L}$, there exists a different convex solution $w \not\equiv v$ of \eqref{eq:mae classical} defined near $x_0$ that touches $v$ from below at $x_0$.
\end{Theorem}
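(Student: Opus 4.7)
The plan is to construct $w$ as the Aleksandrov solution of a Dirichlet problem on a small ball centered at $x_0$, with boundary data perturbed strictly below $v$ off the two points where $L$ meets the boundary sphere, and to use a Pogorelov-type lower barrier to force $w$ to touch $v$ at $x_0$.

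After an affine change of coordinates and subtracting the affine function that agrees with $v$ on $L$, I may assume $x_0=0$, $L=[-1,1]\,e_1$, $v|_L\equiv 0$, and (by convexity) $v\ge 0$ in a neighborhood of $L$; the equation $\det D^2 v=g$ is preserved, with $g$ still bounded above and below by positive constants. Pick $r>0$ so small that $\overline{B_r(0)}\Subset\Omega$ and $L\cap\overline{B_r(0)}=[-re_1,re_1]$, set $B=B_r(0)$, and note that $v|_{\partial B}$ is a continuous nonnegative convex trace vanishing at $\pm re_1$.

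The core of the construction is a Pogorelov-type lower barrier
\[
\underline w(x):=c\,|x'|^{\alpha}(1+x_1^2),\qquad \alpha=2-\tfrac{2}{n},\quad x'=(x_2,\ldots,x_n).
\]
By Lemma \ref{lem:determinat formula} (applied with $n-k=n-1$, $k=1$), on a thin slab around $L$ the function $\underline w$ is convex and $\det D^2\underline w$ equals a positive constant times $c^n$. Choosing $r$ small and $c$ suitably, $\det D^2\underline w\ge g$ in $B$; combined with a transverse lower bound $v(x)\ge c_0|x'|^{\alpha}$ on $\overline B$ -- a Pogorelov-type estimate from $\det D^2 v\ge\lambda$ and $v|_L\equiv 0$, obtained via the Aleksandrov maximum principle on sections of $v$ around $L$ -- one arranges $\underline w\le v$ on $\partial B$. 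The barrier $\underline w$ vanishes on $L\cap\overline B$ and is strictly positive elsewhere in $\overline B$. Next pick continuous boundary data $\tilde v$ on $\partial B$ with $\underline w\le\tilde v\le v$, $\tilde v(\pm re_1)=0$, and $\tilde v\not\equiv v$; this is possible because $\underline w$ and $v$ both vanish at $\pm re_1$ and $\underline w<v$ on a nonempty open portion of $\partial B$. By classical Aleksandrov existence let $w$ be the unique convex Aleksandrov solution of $\det D^2 w=g$ in $B$ with $w=\tilde v$ on $\partial B$. The classical comparison principle yields $\underline w\le w\le v$ in $\overline B$, so
\[
0=\underline w(0)\le w(0)\le v(0)=0,
\]
hence $w(0)=v(0)$ and $w$ touches $v$ from below at $x_0$. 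Uniqueness of Aleksandrov solutions combined with $\tilde v\not\equiv v$ on $\partial B$ forces $w\not\equiv v$ in $B$.

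The hard part is the simultaneous reconciliation in the third paragraph of $\det D^2\underline w\ge g$ and $\underline w\le v$ on $\partial B$: the first condition forces $c$ above a threshold $\sim\Lambda^{1/n}$, while the second forces $c$ below a threshold controlled by the transverse growth of $v$ off $L$. Producing a compatible choice requires a quantitative transverse lower bound for $v$ of Pogorelov type with the right constant, which in turn rests on the fact that $v$ is a solution (not merely a subsolution) of $\det D^2 v=g$ with $g$ uniformly positive, so that the sections of $v$ containing $L$ have controlled volume via the Aleksandrov maximum principle. This barrier construction, and the quantitative estimate it depends on, is the technical heart of the proof.
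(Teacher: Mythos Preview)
This theorem is not proved in the present paper; it is quoted from \cite{jian2025strong}. The closest construction carried out here is Proposition~\ref{prop:non-strict decay}, which produces solutions of the obstacle problem touching $v$ from \emph{above} along a flat piece: one solves the Dirichlet problem on the full domain $\Omega$ with boundary data $v+t\operatorname{dist}(\cdot,E)$ and uses $(\overline E)^{ext}\subset\partial\Omega$ to force $v_t=v$ on $E$ by convexity alone. No barrier is needed in that direction, so there is no ``paper's own proof'' of the from-below statement to compare with.

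Your barrier approach for touching from below is natural, but the transverse lower bound $v(x)\ge c_0|x'|^{\alpha}$ that you invoke has a genuine gap. The Aleksandrov section-volume estimate $|S_h^v|\le Ch^{n/2}$ constrains only the \emph{product} of the transverse widths of $S_h^v$, not each width individually, so it does not by itself yield a pointwise lower bound on $v$ in every direction of $x'$. Concretely, if the maximal affine piece of $\Sigma_v$ through $x_0$ has dimension $k>1$ --- which Caffarelli's theory allows whenever $k<n/2$, for instance a $2$-plane when $n\ge 5$ --- then $v\equiv 0$ along directions in $x'$ orthogonal to $L$, and your inequality $v\ge c_0|x'|^{\alpha}$ fails outright. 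The barrier $\underline w=c|x'|^{\alpha}(1+x_1^2)$ is strictly positive there, so $\underline w\le v$ on $\partial B$ is impossible for any $c>0$.

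A correct argument would first have to pass from $L$ to the maximal flat piece $E\ni x_0$ and build a Pogorelov-type barrier adapted to $E$ (with the exponent depending on $\dim E$, as in the constructions of Section~\ref{sec:examples}). Even then, the compatibility of constants you correctly flag as ``the hard part'' --- matching the subsolution threshold $c\gtrsim\Lambda^{1/n}$ against the transverse growth constant of $v$ off $E$ --- must be established, and your sketch leaves this step unresolved.
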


The next result is a direct consequence of Theorem 1.5 in \cite{jian2025strong}.
\begin{Theorem}\label{thm:smp 1}
Suppose $v_1 \leq v_2$ are solutions to \eqref{eq:obs problem q}. Then on each connected component of $\{v_2>0\}$, we have either $v_1  \equiv v_2$ or $\{v_1 =v_2\} \subset (\Sigma_{v_1} \cap \Sigma_{v_2})$
\end{Theorem}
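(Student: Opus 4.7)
\emph{Proof plan.} The strategy is to reduce Theorem~\ref{thm:smp 1} to the strong maximum principle for the classical Monge-Amp\`ere equation, namely Theorem~1.5 of \cite{jian2025strong}.

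Fix a connected component $W$ of $\{v_2>0\}$. For any $x_0\in W$ with $v_1(x_0)=v_2(x_0)$, positivity of $v_2$ together with $v_1\le v_2$ forces $v_1(x_0)=v_2(x_0)>0$, and continuity places both $v_i$ strictly above zero on a neighborhood. Hence $\{v_1=v_2\}\cap W\subset V:=\{x\in W:v_1(x)>0\}$, where $V$ is open. On $V$ the characteristic functions in \eqref{eq:obs problem q} reduce to $1$, so both $v_i$ are positive convex Aleksandrov solutions of the classical Monge-Amp\`ere equation $\det D^2 v_i=g v_i^q$. Moreover, $v_1\le v_2$ and $q\ge 0$ give $v_1^q\le v_2^q$, hence
\[
\det D^2 v_1 \le \det D^2 v_2 \quad\text{in } V.
\]
Thus $v_1$ is a convex Aleksandrov subsolution and $v_2$ a convex solution of the same classical Monge-Amp\`ere equation $\det D^2 w=g v_2^q$ on $V$, with bounded strictly positive right-hand side---precisely the framework of Theorem~1.5 of \cite{jian2025strong}.

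Applying that theorem on the connected component $V_0\subset V$ containing a chosen contact point $x_0\in\{v_1=v_2\}\cap W$ yields the local dichotomy: either $v_1\equiv v_2$ on $V_0$, or $\{v_1=v_2\}\cap V_0\subset \Sigma_{v_1}\cap\Sigma_{v_2}$. In particular, whenever $x_0\notin \Sigma_{v_1}\cap\Sigma_{v_2}$, the first alternative holds, giving $v_1\equiv v_2$ on an open neighborhood of $x_0$.

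It remains to propagate this local identity from some $V_0$ to the entire connected component $W$. Let $U:=\{x\in W:v_1\equiv v_2\text{ in some neighborhood of }x\}$. By construction $U$ is open, and by the previous step it is nonempty as soon as one contact point of $\{v_1=v_2\}\cap W$ avoids $\Sigma_{v_1}\cap\Sigma_{v_2}$. A connectedness argument then reduces the theorem to showing that $U$ is also closed in $W$: any limit point $x_*\in\overline U\cap W$ lies in $V$, and the local step reapplied at $x_*$ puts $x_*\in U$ whenever $x_*\notin \Sigma_{v_1}\cap \Sigma_{v_2}$. The main obstacle I expect is exactly this closedness: when $x_*\in \Sigma_{v_1}\cap\Sigma_{v_2}$, one must invoke the boundary-emanating, low-dimensional structure of the sets $\Sigma_{v_i}$ from Proposition~\ref{thm:nsc set} to ensure that the singular stratum $\Sigma_{v_1}\cap\Sigma_{v_2}$ cannot separate pieces of $W$, thereby forcing $U=W$ and $v_1\equiv v_2$ throughout $W$.
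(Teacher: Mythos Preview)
Your overall route---work on $\{v_2>0\}$ and invoke Theorem~1.5 of \cite{jian2025strong}---is exactly the paper's; it records Theorem~\ref{thm:smp 1} as ``a direct consequence'' of that result with no further argument.

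The execution, however, has a real gap that is more serious than the connectedness issue you flag at the end. From $v_1\le v_2$ you correctly get $\det D^2 v_1=gv_1^q\le gv_2^q=\det D^2 v_2$, but this makes $v_1$ a \emph{supersolution}, not a subsolution, of the frozen equation $\det D^2 w=gv_2^q$ (in the paper's convention a subsolution satisfies $\det D^2\ge f$; see the proof of Lemma~\ref{lem:strict-decay K=0}). You are therefore in the configuration ``supersolution $\le$ subsolution'', which is the wrong ordering for the strong maximum principle: with a frozen right-hand side, a supersolution touching a subsolution from below need not coincide with it (take $v_1\equiv 0$ and $v_2=\tfrac12|x|^2$ for the equation $\det D^2 w=1$). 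Freezing at $gv_1^q$ instead yields the same bad ordering, so the step cannot be repaired by relabeling.

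What makes the theorem work is not any comparison against a frozen right-hand side, but the fact that both $v_i$ are solutions of the \emph{same} $u$-dependent equation $\det D^2 w=g(x)w^q$, whose operator $F(M,w,x)=\det M-g(x)w^q$ is properly monotone (decreasing) in $w$. Theorem~1.5 of \cite{jian2025strong} is formulated for exactly this situation and yields the dichotomy directly on each connected component; compare how the paper invokes it in the proof of Lemma~\ref{lem:strict-decay K=0}. Quoted in that form, no freezing is needed, and your closing propagation argument---together with the worry about $\Sigma_{v_1}\cap\Sigma_{v_2}$ separating $W$---becomes unnecessary.
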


We extend the preceding analysis to the obstacle problem \eqref{eq:obs problem q}, with particular emphasis on strict inclusion relationships between coincidence sets. Specifically, we investigate the following problem:
\begin{question}\label{ques:smp for obstacle}
Let $v_1 \leq v_2$ be solutions to \eqref{eq:obs problem q}, and denote $K_i = \{v_i = 0\}$ for $i = 1,2$. If $v_1 \not\equiv v_2$ and $K_1 \neq \emptyset$, under what conditions can we conclude that $K_2 \subset \subset K_1$ in $\Omega$ and $v_1 < v_2$ in $\Omega \setminus K_2$?
\end{question}
 
The following two propositions address the above equation.
 
\begin{Proposition}[Failure of the strong maximum principle at $\Sigma_v$]\label{prop:non-strict decay}
Let $v$ be a solution to \eqref{eq:obs problem q}. For any convex subset $E \subset \Sigma_v$ on which $v$ is linear such that $(\overline E)^{ext} \subset \partial \Omega$, there exists a family of convex solutions $\{v_t\}_{t>0}$ to \eqref{eq:obs problem q} satisfying:
\begin{enumerate}
    \item[(i)] $v_t \to v$ uniformly as $t \to 0^+$,
    \item[(ii)] $v_t \geq v$ with $v_t \not\equiv v$,
    \item[(iii)] $v_t \equiv v$ on $E$,
\end{enumerate}
where $\Sigma_v$ is defined in Proposition \ref{thm:nsc set}. When $E \subset \partial K$, we also have $ E \subset \partial\{v_t = 0\}$.
\end{Proposition}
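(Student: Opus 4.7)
The plan is to construct $v_t$ by solving a family of Dirichlet problems whose boundary data equal $v|_{\partial\Omega}$ exactly at the extreme points of $\overline E$ and are strictly larger elsewhere. Since $(\overline E)^{ext}\subset\partial\Omega$ and $\overline E$ is generated by its extreme points, the convexity of $v_t$ will force $v_t\le v$ on $\overline E$, while the comparison principle gives $v_t\ge v$ globally, yielding $v_t\equiv v$ on $E$.

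First I would define $\Psi(x):=\operatorname{dist}(x,\overline E)^{2}$, which is convex on $\overline\Omega$, vanishes precisely on $\overline E$, and is strictly positive somewhere on $\partial\Omega$. (Here $\overline E\neq\overline\Omega$, since otherwise $v$ would be linear and nonnegative on $\Omega$ with $\det D^2 v=0$ forcing $v\equiv 0$; and $\partial\Omega\subset\overline E$ would similarly give $\overline\Omega=\operatorname{conv}(\partial\Omega)\subset\overline E$, a contradiction.) For $t>0$, the function $\varphi_t:=v+t\Psi\in C(\overline\Omega)$ is convex and nonnegative, so Lemma \ref{lem:existence and uniqueness} produces a unique convex solution $v_t$ of \eqref{eq:obs problem q} with $v_t=\varphi_t$ on $\partial\Omega$. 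Since $\varphi_t\ge v$ on $\partial\Omega$, Lemma \ref{lem:comparison principle obs} yields $v_t\ge v$ in $\Omega$, and $v_t\not\equiv v$ because $\Psi\not\equiv 0$ on $\partial\Omega$.

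The heart of the argument is (iii). By Minkowski's theorem in finite dimensions, $\overline E=\operatorname{conv}((\overline E)^{ext})$, so any $x\in\overline E$ admits, by Carath\'eodory, a representation $x=\sum_i\lambda_i x_i$ with $x_i\in(\overline E)^{ext}\subset\partial\Omega$, $\lambda_i\ge 0$, $\sum_i\lambda_i=1$. Since $x_i\in\overline E$ gives $\Psi(x_i)=0$, we have $v_t(x_i)=\varphi_t(x_i)=v(x_i)$. The linearity of $v$ on $E$ extends by continuity of $v\in C(\overline\Omega)$ to affineness on $\overline E$, so $v(x)=\sum_i\lambda_i v(x_i)$. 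Convexity of $v_t$ then gives
\[
v(x)\le v_t(x)\le \sum_i\lambda_i v_t(x_i)=\sum_i\lambda_i v(x_i)=v(x),
\]
so $v_t\equiv v$ on $\overline E$, and in particular on $E$.

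For (i), the function $v+\epsilon$ satisfies $\det D^2(v+\epsilon)=\det D^2 v\le g(v+\epsilon)^{q}$, making it a supersolution; choosing $t$ so small that $\varphi_t\le v+\epsilon$ on $\partial\Omega$, Lemma \ref{lem:comparison principle obs} gives $v_t\le v+\epsilon$, whence $v_t\to v$ uniformly. Finally, if $E\subset\partial K$ then $v\equiv 0$ on $E$, so $v_t\equiv 0$ there, and for any $x\in E\subset\Omega\cap\partial\{v=0\}$ there exist $y_n\to x$ with $v(y_n)>0$, hence $v_t(y_n)\ge v(y_n)>0$, so $x\in\partial\{v_t=0\}$. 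The main technical point to be careful about is the passage from linearity of $v$ on $E$ to affineness on $\overline E$ (needed to identify $v(x)$ with $\sum\lambda_i v(x_i)$ at a general $x\in\overline E$), which follows from the continuity of $v$ on $\overline\Omega$ by a limiting argument.
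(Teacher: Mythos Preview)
Your proof is correct and follows essentially the same strategy as the paper: solve a perturbed Dirichlet problem with boundary data $v+t\,\operatorname{dist}(\cdot,E)$ (you use $\operatorname{dist}(\cdot,\overline E)^2$, which works equally well), apply the comparison principle for $v_t\ge v$ and for the uniform upper bound, and recover $v_t=v$ on $E$ from convexity and the fact that the boundary data agree with $v$ on $(\overline E)^{ext}$. Your write-up is more detailed than the paper's (you spell out the Carath\'eodory/Minkowski step and the supersolution $v+\epsilon$), but the ideas coincide.
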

\begin{proof}
Let $v_t$ solve the obstacle problem \eqref{eq:obs problem q} with boundary data $v_t = v + t\operatorname{dist}(x,E)$ on $\partial\Omega$. Clearly, $v_t \not\equiv v$. The comparison principle implies that $v_{t} \geq v$ in $\Omega$ and $\|v_t - v\|_{L^\infty(\Omega)} \leq Ct$. 
By convexity, since $(\overline E)^{ext} \subset \partial \Omega$ and $v_t=v$ on $(\overline E)^{ext}$, we obtain that $v_{t} = v$ on $E$.
Thus, $E \subset \Sigma_{v_{t}}$.
\end{proof}

\begin{Proposition}[Validity of the strong maximum principle at $\Gamma_{sc}$]\label{prop:strict decay}
Let $v_1 \leq v_2$ be solutions to \eqref{eq:obs problem q}. Assuming either $q = 0$ or $g \in Lip(\Omega)$. Then on each connected component of $\Omega \setminus \mathring{K_2}$, we have either $v_1  \equiv v_2$ or $\{v_1 =v_2\} \subset (\Sigma_{v_1} \cap \Sigma_{v_2})$.
\end{Proposition}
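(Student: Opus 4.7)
The plan is to fix a connected component $U$ of $\Omega\setminus\mathring{K_2}$, assume $v_1\not\equiv v_2$ on $U$, and show that every touching point $x_0\in\{v_1=v_2\}\cap U$ lies in $\Sigma_{v_1}\cap\Sigma_{v_2}$. I split according to whether $v_2(x_0)>0$ or $x_0\in\partial K_2\cap U$: the former is essentially handled by Theorem \ref{thm:smp 1}, and the latter by the structural description of the coincidence set in Remark \ref{rem:dichotomy for Ktype} together with a Hopf-type strong maximum principle in the remaining singleton case. The hypothesis ``$q=0$ or $g\in\operatorname{Lip}(\Omega)$'' enters only in this last step.

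If $v_2(x_0)>0$, let $V$ be the connected component of $\{v_2>0\}$ through $x_0$. Since $V$ is connected and $\{v_2>0\}=\Omega\setminus K_2\subset\Omega\setminus\mathring{K_2}$, one has $V\subset U$, and Theorem \ref{thm:smp 1} yields either $\{v_1=v_2\}\cap V\subset\Sigma_{v_1}\cap\Sigma_{v_2}$ (in which case $x_0$ is already in the desired singular set) or $v_1\equiv v_2$ on $V$; the second alternative will be shown below to propagate to $v_1\equiv v_2$ on all of $U$, contradicting our assumption.

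If instead $v_2(x_0)=0$, then $x_0\in K_2\cap U=\partial K_2\cap U$ and $v_1(x_0)=0$ follows from $K_2\subset K_1$. Whenever $K_2$ contains some point $z\neq x_0$ (which covers both $|K_2|>0$ and the flat non-singleton case $K_2=\Gamma_{nsc}(v_2)$ of Remark \ref{rem:dichotomy for Ktype}), the identity $v_i(z)=v_i(x_0)+0\cdot(z-x_0)$ with $0\in\partial v_i(x_0)$ places $x_0$ in $\Sigma_{v_i}$ for both $i=1,2$, using $K_1\supset K_2$ to produce the witness for $v_1$. The only remaining sub-case is $K_2=\{x_0\}$: here $x_0\in\Gamma_{sc}$ is strictly convex for $v_2$, and the regularity of \cite{jin2025regularity} gives $v_1,v_2\in C^{1,\beta}$ near $x_0$ with vanishing gradient there. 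Linearizing $\det D^2 v=gv^q$ along the family $tv_2+(1-t)v_1$ produces
\[
a^{ij}(x)\,\partial_{ij}(v_2-v_1)=g(x)\bigl(v_2^q-v_1^q\bigr),
\]
where $a^{ij}$ is the integrated cofactor matrix. The strict convexity of $v_2$ at $x_0$ makes $a^{ij}$ uniformly elliptic on a neighborhood of $x_0$ intersected with $\{v_2>0\}$: for $q=0$ the right-hand side vanishes, while for $q>0$ the Lipschitz hypothesis on $g$ together with $v_1,v_2>0$ away from $x_0$ rewrites it as a bounded zero-order term $c(x)(v_2-v_1)$. The classical strong maximum principle then forces $v_2-v_1\equiv 0$ near $x_0$ inside $\{v_2>0\}$, which propagates via Theorem \ref{thm:smp 1} on the connected set $\Omega\setminus\{x_0\}$ to $v_1\equiv v_2$ on all of $U$, contradicting the assumption.

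The same linearized SMP step closes the propagation argument postponed above: if $v_1\equiv v_2$ on one component $V$ of $\{v_2>0\}$ inside $U$ but fails on another component $V'$ of $U\cap\{v_2>0\}$, any shared boundary point lying in $\Gamma_{sc}$ triggers the identical SMP and forces equality across the interface, while interface points in $\Gamma_{nsc}$ automatically belong to $\Sigma_{v_1}\cap\Sigma_{v_2}$ by the argument of the third paragraph. The main obstacle is the SMP at a strictly convex free-boundary point in the degenerate regime $v_i\to 0$: one must verify that the cofactor matrix $a^{ij}$ remains uniformly elliptic up to $x_0$ despite $\det D^2 v_2=gv_2^q\to 0$, and that the lower-order term for $q>0$ is bounded on the relevant neighborhood, which is precisely where the dichotomy ``$q=0$ or $g\in\operatorname{Lip}(\Omega)$'' becomes essential.
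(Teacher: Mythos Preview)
Your main gap is a misreading of $\Sigma_v$. In the obstacle problem, $\Sigma_v$ is \emph{not} the set of non-strictly convex points of $v$; it is defined in Proposition~\ref{thm:nsc set} as the union of affine pieces of $v$ whose extreme points reach $\partial\Omega$, and Remark~\ref{rem:dichotomy for Ktype} emphasises that the two notions agree only \emph{outside} $K$. In particular $\Gamma_{sc}^{v_2}\cap\Sigma_{v_2}=\emptyset$ (this is precisely the identity $\Gamma_{nsc}=\Sigma_v\cap\Gamma$), so your sentence ``whenever $K_2$ contains some point $z\neq x_0$\dots places $x_0$ in $\Sigma_{v_i}$'' is false at every strictly convex free-boundary point when $|K_2|>0$. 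This case is the heart of the proposition (its title is ``Validity of the strong maximum principle at $\Gamma_{sc}$''), and you have not treated it at all: you must still rule out a touching point $x_0\in\Gamma_{sc}^{v_2}$ with $|K_2|>0$. The paper handles exactly this in Lemma~\ref{lem:strict-decay}~(i) by a comparison argument with the shear $\T x=x-\varepsilon(x_n-\tau)e_n$, where the Lipschitz hypothesis on $g$ is used to ensure $\det D^2(v_2\circ\T)\leq g\,(v_2\circ\T)^q$ after the shift.

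Your linearisation route for the singleton case $K_2=\{x_0\}$ is also incomplete, and you essentially concede this in your last paragraph. Near a strictly convex free-boundary point one has $\det D^2 v_i=gv_i^q\to 0$, so the integrated cofactor matrix $a^{ij}$ is not uniformly elliptic up to $x_0$; for $0<q<1$ the factor $(v_2^q-v_1^q)/(v_2-v_1)$ blows up, so the zero-order coefficient is not bounded either. Neither issue is resolved by assuming $g\in\operatorname{Lip}$. The paper avoids linearisation entirely: for $|K_2|=0$ it rescales via the normalisation~\eqref{eq:normalized} so that $\|Dg_h\|_{L^\infty}$ becomes small and then repeats the shear comparison from case~(i), deriving a contradiction of the form~\eqref{eq:contradiction 1}. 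You would need an argument of comparable strength, not the classical linear SMP, to close this step.
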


Proposition \ref{prop:strict decay} follows from combining Theorem \ref{thm:smp 1} with Lemma \ref{lem:strict-decay} below. 
To avoid confusions, we use $ K_i, \Gamma^{v_i}, \Gamma_{nsc}^{v_i}, \Gamma_{sc}^{v_i}$ to denote the $K, \Gamma, \Gamma_{nsc}, \Gamma_{sc}$ for $v_i$, respectively.

\begin{Lemma}\label{lem:strict-decay}
Let $v_1 \leq v_2$ be solutions to \eqref{eq:obs problem q}. Assuming either $q = 0$ or $g \in Lip(\Omega)$, then for any $x_0 \in \partial K_1 \cap \partial K_2$, either $x_0 \in \Gamma_{nsc}^{v_1} \cap \Gamma_{nsc}^{v_2}$ or $v_1 \equiv v_2$ in a neighborhood of $x_0$.
\end{Lemma}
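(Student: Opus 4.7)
The plan is to argue by contradiction: assume $x_0 \in \partial K_1 \cap \partial K_2$ with $v_1 \not\equiv v_2$ in every neighborhood of $x_0$, and $x_0 \notin \Gamma_{nsc}^{v_1} \cap \Gamma_{nsc}^{v_2}$ so that $x_0 \in \Gamma_{sc}^{v_1} \cup \Gamma_{sc}^{v_2}$, and derive a contradiction. The first step will be to reduce to $x_0 \in \Gamma_{sc}^{v_2}$: if only $x_0 \in \Gamma_{sc}^{v_1}$ held, $K_1$ would admit a unique supporting hyperplane $H$ at $x_0$ with $K_1 \cap H = \{x_0\}$; supposing for contradiction $x_0 \in \Sigma_{v_2}$, Proposition \ref{thm:nsc set} would yield a non-trivial convex set $E$ on which $v_2$ is linear with $(\overline{E})^{ext} \subset \partial\Omega$. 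Since $x_0 \in \Omega$, $x_0$ must lie in the relative interior of $E$, so linearity of $v_2|_E$ combined with $v_2 \geq 0$ and $v_2(x_0)=0$ forces $v_2 \equiv 0$ on $E$, whence $E \subset K_2 \subset K_1 \subset H^-$ locally. Then the linear functional defining $H$ must vanish on the affine hull of $E - x_0$ (its value on $E$ is bounded above by its value at $x_0$, with $x_0$ interior to $E$), forcing $E \subset H$ and thus $E \subset K_1 \cap H = \{x_0\}$, contradicting non-triviality. Hence WLOG $x_0 \in \Gamma_{sc}^{v_2}$.

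Next I would invoke the regularity theory from \cite{jin2025regularity}: under the assumption $q=0$ or $g \in Lip(\Omega)$, $v_2$ is $C^{1,1}$ near $x_0$ with $\nabla v_2(x_0)=0$, $\partial K_2$ is smoothly strictly convex near $x_0$, and $\Sigma_{v_2} \cap B_r(x_0) = \emptyset$ for some small $r > 0$. In particular $B_r(x_0) \setminus K_2$ is connected; let $U$ denote the connected component of $\{v_2 > 0\}$ that contains it. Applying Theorem \ref{thm:smp 1} on $U$ produces two alternatives: either $v_1 \equiv v_2$ on $U$, whence $0 \leq v_1 \leq v_2 = 0$ on $K_2 \cap B_r(x_0)$ gives $v_1 \equiv v_2$ on all of $B_r(x_0)$, contradicting the standing hypothesis; or $\{v_1=v_2\} \cap U \subset \Sigma_{v_1}\cap\Sigma_{v_2} \cap B_r(x_0) = \emptyset$, forcing $v_1 < v_2$ strictly on $B_r(x_0)\setminus K_2$.

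The final step will be to derive a contradiction from this strict inequality via Hopf's lemma. The unique supporting hyperplane $H$ of $K_2$ at $x_0$ also supports $K_1$ (by $K_2 \subset K_1$ and uniqueness), so $K_1 \subset H^-$ locally, and the open half-ball $B_r(x_0) \cap H^+$ lies in $B_r(x_0) \setminus K_1$ and satisfies the interior ball condition at $x_0$. On $B_r(x_0) \setminus K_1$, both $v_1, v_2 > 0$ solve $\det D^2 v = gv^q$; with $w := v_2 - v_1$ and $v_t := (1-t)v_1 + tv_2$, the integral identity $\det D^2 v_2 - \det D^2 v_1 = \int_0^1 \mathrm{tr}(\mathrm{cof}(D^2 v_t) D^2 w)\,dt$ yields the linear equation $a^{ij} w_{ij} = cw$ with $a^{ij} := \int_0^1 \mathrm{cof}(D^2 v_t)^{ij}\,dt$ and $c := qg\int_0^1 v_t^{q-1}\,dt \geq 0$. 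Hopf's lemma will then give $\liminf_{t\to 0^+} w(x_0+t\nu)/t > 0$ for the inner normal $\nu\in H^+$; on the other hand the $C^{1,1}$ regularity of $v_2$ with $\nabla v_2(x_0)=0$ combined with $v_1 \geq 0$ gives $w(x_0+t\nu) \leq v_2(x_0+t\nu) \leq Ct^2$, so $w(x_0+t\nu)/t \to 0$, a contradiction.

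The main obstacle will be verifying the hypotheses of Hopf's lemma, namely uniform ellipticity of $a^{ij}$ and boundedness of $c$ on $B_r(x_0) \cap H^+$. For $q=0$ this is routine via Savin's $C^{1,1}$ theory together with the lower bound $\det D^2 v_t \geq g$ coming from concavity of $\det^{1/n}$ on positive definite matrices, which pins the eigenvalues of $D^2 v_t$, hence of $\mathrm{cof}(D^2 v_t)$, between two positive constants. For $q > 0$ the Monge-Amp\`ere equation degenerates at $\partial K$, and one must carefully exploit the Lipschitz regularity of $g$ and the regularity of $v_1, v_2$ near $\Gamma_{sc}^{v_2}$ from \cite{jin2025regularity} to secure the ellipticity and coefficient bounds on the half-ball where Hopf is ultimately applied.
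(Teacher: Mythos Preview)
Your reduction to $x_0 \in \Gamma_{sc}^{v_2}$ and the application of Theorem \ref{thm:smp 1} to obtain $v_1 < v_2$ on $B_r(x_0)\setminus K_2$ are correct and match the paper. The divergence is in the final contradiction: you linearize and invoke Hopf's lemma, whereas the paper uses the nonlinear comparison principle with explicit barriers.

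The gap is precisely the verification of Hopf's hypotheses, which you flag as an obstacle but do not resolve. For $q>0$ the equation $\det D^2 v_i = g v_i^q$ degenerates at $\partial K_i$: as $x\to x_0$ the Hessian eigenvalues of $v_2$ (hence of $v_t$) go to $0$ in some directions, so $a^{ij}=\int_0^1 \mathrm{cof}(D^2 v_t)^{ij}\,dt$ is not uniformly elliptic on any half-ball touching $x_0$; simultaneously, for $q\in(0,1)$ the zeroth-order coefficient $c=qg\int_0^1 v_t^{q-1}\,dt$ is unbounded. Even for $q=0$ you need $D^2 v_1$ bounded up to $x_0$, but the reduction only yields $x_0\in\Gamma_{sc}^{v_2}$; the case $x_0\in\Gamma_{nsc}^{v_1}\cap\Gamma_{sc}^{v_2}$ is genuinely possible (e.g.\ $K_1$ a segment and $K_2$ a single interior point of it), and then $v_1$ may be merely Lipschitz at $x_0$, so the linearized coefficients need not be bounded. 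The regularity theory in \cite{jin2025regularity} does not supply uniform ellipticity of the linearized operator across the free boundary.

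The paper avoids linearization entirely. For $q=0$ one compares $v_2$ with the subsolution $v_1-\delta x_n$ on the half-ball $B_1\cap\{x_n\le 0\}$ (no regularity of $v_1$ required), obtaining $v_2(-te_n)\ge \delta t$, which contradicts $\nabla v_2(0)=0$. For $q>0$ one uses a sliding argument: after normalization, set $w(x)=v_2\big(x-\varepsilon(x_n-\tau)e_n\big)$; the Lipschitz bound on $g$ guarantees $\det D^2 w \le g\,w^q$, and the comparison principle then forces a translate of $K_2$ into $K_1\subset\{x_n\ge 0\}$, a contradiction. This barrier/sliding technique, not Hopf, is the key idea you are missing.
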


\begin{proof}
For simplicity, assume $K_2 \subset K_1 \subset \{x_n \geq 0\}$ with $x_0 = 0\not\in \Gamma_{nsc}^{v_1} \cap \Gamma_{nsc}^{v_2}$ and $g(0) = 1$.  Therefore, $0 \in \Gamma_{sc}^{v_2}$ (since if $0\in \Gamma^{v_1}_{sc}$ then $0\in \Gamma^{v_2}_{sc}$ due to $K_2\subset K_1$), and it suffices to prove $v_1 = v_2$ near $0$. We  further assume that $B_\rho(0)\subset \Omega$ and $B_\rho(0)\cap \Sigma_{v_2}=\emptyset$ for some $\rho > 0$. 

Suppose by contradiction that $v_2 \not\equiv v_1$. Then Theorem \ref{thm:smp 1} yields $v_2 > v_1$ in $B_\rho(0) \setminus K_2$.

{\bf (i): The case of $|K_2|>0$.} 
After applying the normalization in Definition 2.21 of \cite{jin2025regularity}, which we still denote by $v_2$, we may assume that for some $C>1$,   $B_C(0)\setminus K_2$ is connected and $\Sigma_{v_2} \cap B_C(0)=\emptyset$. 

If $q=0$, we have $v_2 \geq v_1 - \delta x_n$ on  $\partial (B_1(0)\cap\{x_n\le 0\})$ holds for all sufficiently small $\delta > 0$.
The comparison principle then implies $v_2(-te_n) \geq \delta t$ for small $t>0$. 
However, this contradicts the $C^1$ regularity of $v_2$ at $0$, where $\nabla v_2(0) = 0$. 

If $q \in (0,n)$,  after the above normalization, and we can further assume that
\[
\| D g\|_{L^\infty(B_C(0))} \leq \frac{1}{4}.
\]
Noting that $v_2 > v_1$ outside $K_2$, the strict convexity of $\partial K_2$ implies that:
\[
v_2  > v_1 \quad \text{ on } \partial B_1(0)\cap \{ x_n \leq \tau\}  
\]
provided $\tau$ is sufficiently small. 
Let  $\varepsilon > 0$ be a small constant. Consider the auxiliary function  
\begin{equation}\label{eq:auxiliary w}
w (x) = v_2\left( \T x  \right)\quad \text{in }   B_1(0),
\end{equation}
where 
\[ 
\T x = x -\varepsilon(x_{n} - \tau)e_{n}.
\]
We observe that
$
w  > v_1 \text{ on } \partial B_1(0)\cap \{ x_n \leq \tau\}  
$
provided $\varepsilon$ is small by continuity, and
$
 w = v_2 \geq v_1  \text{ on } B_1(0)\cap  \{ x_n = \tau\}.
$
In conclusion, $w \geq v_1 $ on $\partial (B_1(0)\cap  \{ x_n = \tau\})$.  Moreover, the function $w$ satisfies
\[
\det  D^2 w =(1-\varepsilon)^2   (g\circ \T) w^q \leq
(1-\varepsilon)^2(1+\varepsilon)   g  w^q \leq  gw^q .
\] 
Applying the comparison principle, we obtain  $w\geq v_1  $ in $B_1(0)\cap  \{ x_n\le \tau\}$. This leads to a contradiction that
\begin{equation}\label{eq:contradiction 1}
-\frac{\tau\varepsilon}{1-\varepsilon}e_n \subset K_1 \subset \{x_n \geq 0\}.
\end{equation}

{\bf (ii): The case of $|K_2|=0$.} 
In this case, we must have $K_2=\{0\}$. 
When $q=0$, the proof follows from the strong maximum principle, see Theorem 1.3 in \cite{jian2025strong}.
When $q \in (0,n)$, 
we apply the normalization (from \cite[Page 25]{jin2025regularity}): for any $h>0$ small, choose affine transformations $\D_h$ with $\det \D_h=h^{\frac{n-q}{2}}$ such that $B_c(0)\subset S_{h}^{v_{2}}\subset B_C(0)$, where $S_h^v := \{x \in \Omega : v(x) < h\}$, and define
\begin{equation}\label{eq:normalized}
v_{2,h}(x) = \frac{v_2(\D_h x)}{h},
\end{equation}
which satisfies
\[
\det D^2 v_{2,h}=g_hv_{2,h}^q\chi_{\{v_{2,h}>0\}}
\]
with $g_h =g\circ \D_h$. By choosing sufficiently small $h$, we may assume that $\Sigma_{v_{2,h}} \cap B_4(0)=\emptyset$, and $\| D g_h\|_{L^\infty(B_4(0))}\leq  1$. 
Repeating the argument from the case $|K_2|>0$, the auxiliary function $w_h:=v_{2,h}\left( \T x  \right)$ in \eqref{eq:auxiliary w} satisfies $w_h\geq v_1$ in $B_{1}(0)$, leading to the contradiction \eqref{eq:contradiction 1}.
\end{proof}

When $|K_2|=0$, the Lipschitz condition $g \in Lip(\Omega)$ in Lemma \ref{lem:strict-decay} becomes unnecessary, as shown below. 
\begin{Lemma}\label{lem:strict-decay K=0}
Let $v_1 \leq v_2$ be solutions to \eqref{eq:obs problem q}.  If $0\in \partial K_1$ and $K_2=\{0\}$, then $v_1\equiv v_2$.
\end{Lemma}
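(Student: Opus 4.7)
The plan is to argue by contradiction, following the structure of the proof of Lemma~\ref{lem:strict-decay}(ii) but bypassing its Lipschitz step via an approximation of $g$ by Lipschitz densities combined with the stability of the obstacle problem.

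First, suppose $v_1 \not\equiv v_2$. Since $K_2=\{0\}$ is a singleton, $0$ is automatically an exposed point of $K_2$, so $0 \in \Gamma_{sc}^{v_2}$; by Proposition~\ref{thm:nsc set}, $\Sigma_{v_2}$ is disjoint from some neighborhood $U$ of $0$, $v_2 \in C^{1,\beta}$ near $0$, and $\nabla v_2(0)=0$ (inherited from $v_2\ge 0$ attaining its minimum at $0$). The connectedness of $\Omega\setminus\{0\}$ combined with Theorem~\ref{thm:smp 1} then forces $v_1 < v_2$ strictly on $U\setminus\{0\}$. After a rotation, we may assume $K_1 \subset \{x_n \geq 0\}$ by using any supporting hyperplane of $K_1$ at $0$.

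The main step is to mollify $g$ and apply Lemma~\ref{lem:strict-decay} to the approximate problems. For $\eta>0$ small, set $g^\eta = g * \rho_\eta \in C^\infty_b(\overline\Omega)$ with $\lambda \leq g^\eta \leq \Lambda$ and $\|Dg^\eta\|_\infty <\infty$; let $v_i^\eta$ be the (unique) solutions of \eqref{eq:obs problem q} with density $g^\eta$ and boundary data $v_i|_{\partial\Omega}$ (Lemma~\ref{lem:existence and uniqueness}). The comparison principle gives $v_1^\eta \leq v_2^\eta$; by the stability of the obstacle problem (Proposition~\ref{prop:stability of K}, together with the standard stability of Monge--Amp\`ere solutions under $L^1$-convergence of right-hand sides), $v_i^\eta \to v_i$ uniformly on $\overline\Omega$ and $K_i^\eta \to K_i$ in Hausdorff distance as $\eta\to 0^+$. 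In particular, $K_2^\eta$ collapses to $\{0\}$, so for all sufficiently small $\eta$ the set $K_2^\eta$ is a small convex set, and the common boundary point $0 \in \partial K_1 \cap \partial K_2$ persists (after an arbitrarily small translation absorbed into the boundary data) as a common boundary point $0 \in \partial K_1^\eta \cap \partial K_2^\eta$. Since $g^\eta$ is Lipschitz, Lemma~\ref{lem:strict-decay} then applies to the pair $v_1^\eta \leq v_2^\eta$, yielding $v_1^\eta \equiv v_2^\eta$ in the connected component of $\Omega \setminus \mathring K_2^\eta$ containing $0$. Passing $\eta \to 0^+$ gives $v_1 \equiv v_2$ in $\Omega$, contradicting the strict inequality on $U\setminus\{0\}$.

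The main technical obstacle lies in ensuring that $\partial K_1^\eta \cap \partial K_2^\eta \neq \emptyset$ for all small $\eta$: the inclusion $K_2^\eta \subset K_1^\eta$ (from $v_1^\eta\le v_2^\eta$) a priori permits $K_2^\eta$ to lie strictly interior to $K_1^\eta$, in which case the hypothesis of Lemma~\ref{lem:strict-decay} fails. Handling this requires either a careful perturbation of the boundary data of $v_1^\eta$ forcing tangential contact of $\partial K_1^\eta$ with $\partial K_2^\eta$ near $0$, or a direct argument on a small section $S^{v_2^\eta}_h$ with $h=h(\eta)$ chosen to match the size of $K_2^\eta$. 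The key ingredients that make such a fix feasible are the $C^{1,\beta}$ regularity of $v_2$ at the exposed point $0$ (Proposition~\ref{thm:nsc set}), the strict convexity of $v_2$ in $U$, and the Hausdorff stability of the coincidence sets provided by Proposition~\ref{prop:stability of K}.
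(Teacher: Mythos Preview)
Your approach has a genuine gap that undermines the whole strategy. You invoke Proposition~\ref{prop:stability of K} to assert that $K_2^\eta \to K_2$ in Hausdorff distance, but that proposition says precisely the opposite: the coincidence set is stable \emph{if and only if} it has positive measure. Since $K_2=\{0\}$ has $|K_2|=0$, the proposition tells you $K_2$ is \emph{unstable}; in particular nothing prevents $K_2^\eta=\emptyset$ for all small $\eta$, in which case Lemma~\ref{lem:strict-decay} has no point $x_0\in\partial K_1^\eta\cap\partial K_2^\eta$ to work with and the argument collapses. The same issue can hit $K_1$ if $|K_1|=0$, which the hypotheses do not exclude.

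The ``technical obstacle'' you flag---arranging tangential contact of $\partial K_1^\eta$ and $\partial K_2^\eta$---is therefore not a side issue but the heart of the matter, and your suggested fixes (perturbing boundary data, choosing $h=h(\eta)$) are not fleshed out enough to see how they would overcome the instability. By contrast, the paper avoids approximating $g$ altogether and exploits the homogeneity of the equation directly: it studies the ratio $a_h:=\inf_{\partial S_h^{v_2}} v_2/v_1$, shows via the comparison principle (using that $a_h v_1$ is a subsolution when $a_h\ge 1$) that $a_h$ is nonincreasing and bounded, and then runs a blow-up/compactness argument on the normalized sections $S_h^{v_2}$ to derive a contradiction from the constancy of the limiting ratio. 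This scaling argument is intrinsic to the singleton case and does not reduce to the Lipschitz-$g$ lemma.
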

\begin{proof}  
Let us assume that $K_1 \subset \{x_n\geq 0\}$. For $h>0$ small, let
\[
a_h:=a_h(v_1,v_2) = \inf_{x\in \partial S_h^{v_2}} \frac{v_2(x)}{v_1(x)}  \geq 1.
\]
Since $a_h \geq 1$ and $n>q$, the function $a_hv_1 $ is a subsolution to \eqref{eq:obs problem q}, i.e., 
$\det D^2 (a_h v_1)\ge (a_h v_1)^q\chi_{\{a_hv_1>0\}}$. By the comparison principle, $v_2  \geq a_hv_1 $ in $S_h^{v_2}$, which implies that $a_h$ is non-increasing in $h$.

By Theorem 1.7 in \cite{jin2025regularity}, $v_2$ is $C^{1,\alpha}$ and strictly convex near $0$. This implies   $ \Sigma_{v_2}\cap S_{3h}^{v_2}=\emptyset$ for small $h>0$. 

(i):  Suppose $a_{h_0} = 1$ for a small $h_0>0$. Then the strong maximum principle (see Theorem 1.5 of \cite{jian2025strong})  implies that  
$v_2 \equiv  v_1$ in $\{v_2>0\}$. By continuity, this identity extends globally in $\Omega$, completing the proof.

(ii): Suppose $a_h > 1$ for all small $h>0$.
Then the strong maximum principle (see Theorems 1.5 and 1.6 of \cite{jian2025strong}) implies that in $S_h^{v_2}\setminus\{0\}$, either $v_2 > a_h v_1$ or $v_2 \equiv a_h v_1$. Nonetheless, if $v_2 \equiv a_h v_1$, then it follows from the equations of $v_1$ and $v_2$ that $a_h = 1$. Hence, $v_2 > a_h v_1$ holds in $S_h^{v_2}\setminus\{0\}$. Therefore, $a_h$ is strictly decreasing in $h$.

The positive constants $c$ and $C$ below may vary from line to line but depend only on $n,q,\lambda$ and $\Lambda$.

Next, we establish the uniform upper bound $a_h \leq C$. Arguing by contradiction, suppose for every $\varepsilon > 0$, there exists $h_0 > 0$ such that $a_{h_0} \geq \varepsilon^{-1}$. Then we have $S_{\varepsilon h_0}^{v_1} \supset S_{h_0}^{v_2}$. Since we assumed that $K_1 \subset \{x_n\geq 0\}$, then Lemma \ref{lem:volume concidence} yields 
\[
|S_{h_0}^{v_2} \cap \{x_n <0\} | \leq |S_{\varepsilon h_0}^{v_1} \cap \{x_n <0\} | \leq C(n,q) (\varepsilon h_0)^{\frac{n-q}{2}}. 
\]
For sufficiently small $\varepsilon > 0$, this leads to a contradiction with Lemma 2.13 of \cite{jin2025regularity}, which states that the section $S_{h_0}^{v_2}$ is balanced about  $0$ and satisfies
\[
c(h_0)^{\frac{n-q}{2}} \leq |S_{h_0}^{v_2}| \leq C(h_0)^{\frac{n-q}{2}}.
\] 
This proves that there exists $C>0$ such that $a_h\le C$ for all $h>0$ small.

Let us define 
\[
\tilde{a}= \lim_{h\to 0^+}a_h \in (1,C].
\]

Apply the normalization procedure as in \eqref{eq:normalized} to $v_2$ and define  $v_{1,h}(x) = v_1(\D_h x)/h$ as well. Then we have $\det D^2 v_{1,h}=g_hv_{1,h}^q\chi_{\{v_{1,h}>0\}} $, and 
\[
a_s(v_{1,h},v_{2,h}) =\inf_{x\in \partial S_1^{v_{2,sh}}} \frac{v_{2,sh}(x)}{v_{1,sh}(x)} = \inf_{x\in \partial S_{sh}^{v_2}} \frac{v_2(x)}{v_1(x)}  =a_{sh}(v_1,v_2),\quad \forall s\in (0,1).
\] 
After passing to a subsequence of $h \to 0$,  
we can assume $g_h \to \tilde{g}$  in the measure sense with $\lambda < \tilde{g} < \Lambda$, and $v_{1,h}$ and $v_{2,h}$ converge locally uniformly to some nonnegative convex functions $\tilde{v}_1$ and $\tilde{v}_2$, respectively, that solves  
\[
\det D^2  \tilde{v}_i = \tilde{g} \tilde{v}_i^{q} \chi_{\{\tilde{v}_i > 0\}},\quad i=1,2.
\]
From the uniform convergence, we have $B_c(0) \subset S_{1}^{\tilde{v}_2}\subset B_C(0)$,  
and 
\[
\tilde{a}_s:=a_s(\tilde{v}_1,\tilde{v}_2)= \lim_{h\to 0^+} a_s(v_{1,h},v_{2,h}) = \lim_{h\to 0^+}a_{sh}(v_1,v_2)  =\tilde{a} >1,\quad \forall s\in  (0,1). 
\]
This is a contradiction, since $\tilde a_s$ is strictly decreasing in $s$ unless $\tilde a_s \equiv 1$, as proved at the begnning. This completes the proof.
\end{proof}

\begin{proof}[Proof of Proposition \ref{prop:strict decay}]
It follows from Theorem \ref{thm:smp 1} and Lemma \ref{lem:strict-decay}.
\end{proof}

\subsection{Stability of coincidence sets}

Finally, we investigate the stability of coincidence sets under perturbations of solutions. 

\begin{Definition}
For a solution $v$ of \eqref{eq:obs problem q}, we say that $K=\{x\in \Omega:\; v(x)=0\}$ is stable if for every sequence of solutions $\{v_i\}$ to  \eqref{eq:obs problem q} that converges locally uniformly to $v$, the coincidence sets $K_i := \{x\in\Omega: v_i (x)= 0\}$ satisfy $K_i \to K$ in the set-theoretic sense.
\end{Definition}


\begin{Proposition}\label{prop:stability of K}
Let $v$ be a solution to  \eqref{eq:obs problem q} with $K\neq \emptyset$. Then $K$ is stable if and only if $|K|>0$.
\end{Proposition}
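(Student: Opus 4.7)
The plan is to prove the two implications separately using the machinery already developed.

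For sufficiency ($|K|>0 \Rightarrow K$ stable), take $v_i\to v$ locally uniformly with $K_i=\{v_i=0\}$. The inclusion $\limsup K_i\subset K$ is immediate from uniform convergence (any limit of $x_i\in K_i$ satisfies $v(x)=0$). For $K\subset \liminf K_i$, by density of $\mathring K$ in $K$ (available because $|K|>0$) I reduce to $x_0\in \mathring K$, pick $B_\rho(x_0)\subset \mathring K$, and argue by contradiction. If $B_\delta(x_0)\cap K_i=\emptyset$ along a subsequence for some $\delta\in(0,\rho)$, then $v_i>0$ on the closed ball $O:=\overline{B_{\delta/2}(x_0)}$, while $M_i:=\sup_O v_i \to 0$ since $O\subset K$. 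Lemma \ref{lem:volume concidence} applied to $v_i$ on $O$ (with $M_i$ as the boundary bound) yields
\[
c(\delta/2)^n = |O| \le C\, M_i^{(n-q)/2}\to 0,
\]
a contradiction.

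For necessity ($|K|=0 \Rightarrow K$ not stable), I would exhibit a bad sequence. Set $\varphi = v|_{\partial\Omega}$ and let $v_\varepsilon$ be the unique solution of \eqref{eq:obs problem q} with boundary data $\varphi+\varepsilon$ given by Lemma \ref{lem:existence and uniqueness}. Since $v+\varepsilon$ is a supersolution ($\det D^2(v+\varepsilon)=gv^q\chi_{\{v>0\}}\le g(v+\varepsilon)^q$) and $v$ is itself a subsolution with smaller boundary data, Lemma \ref{lem:comparison principle obs} yields $v\le v_\varepsilon\le v+\varepsilon$ in $\Omega$, so $v_\varepsilon\to v$ uniformly. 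The goal is to show $K_\varepsilon=\emptyset$ for every $\varepsilon>0$, which immediately witnesses $K_\varepsilon\not\to K\ne\emptyset$.

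Suppose for contradiction that $x^*\in K_\varepsilon$. Then $v\le v_\varepsilon$ forces $x^*\in K$, so $|K_\varepsilon|\le|K|=0$. By Remark \ref{rem:dichotomy for Ktype} applied to $v_\varepsilon$, either $K_\varepsilon=\{x^*\}$ or $\dim K_\varepsilon\ge 1$. In the singleton case, I translate so that $x^*=0$ (noting $0\in\partial K$ since $|K|=0$ gives $K=\partial K$) and apply Lemma \ref{lem:strict-decay K=0} with $v_1=v$, $v_2=v_\varepsilon$ to conclude $v\equiv v_\varepsilon$, contradicting the distinct boundary data. If instead $\dim K_\varepsilon\ge 1$, Remark \ref{rem:dichotomy for Ktype} gives $K_\varepsilon=\Gamma_{nsc}^{v_\varepsilon}$, so Proposition \ref{thm:nsc set} produces a convex set $E\subset\Omega$ with $x^*\in E$, $v_\varepsilon$ affine on $E$, and $(\overline E)^{ext}\subset\partial\Omega$. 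Extending $v_\varepsilon$ by continuity to an affine function on $\overline E$, at each extreme point $y_j\in(\overline E)^{ext}$ one has $v_\varepsilon(y_j)=\varphi(y_j)+\varepsilon\ge \varepsilon$, so writing $x^*=\sum_j\lambda_j y_j$ via Krein-Milman,
\[
0 = v_\varepsilon(x^*) = \sum_j \lambda_j v_\varepsilon(y_j)\ge \varepsilon,
\]
a contradiction.

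The main obstacle I expect is this last step in the necessity direction: the singleton case is dispatched cleanly by Lemma \ref{lem:strict-decay K=0}, but ruling out a positive-dimensional $K_\varepsilon$ requires crucially that every element of $\Sigma_{v_\varepsilon}$ lies in a convex set whose extreme points are anchored on $\partial\Omega$, where the shifted data $\varphi+\varepsilon\ge\varepsilon$ obstructs the vanishing of $v_\varepsilon$.
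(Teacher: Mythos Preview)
Your proof is correct and rests on the same key ingredients as the paper's: Lemma~\ref{lem:volume concidence} for the sufficiency direction, and Lemma~\ref{lem:strict-decay K=0} together with Proposition~\ref{thm:nsc set} for the necessity direction. The sufficiency argument is essentially identical to the paper's, only organized without Blaschke selection (you work directly with a ball in $\mathring K$ rather than passing to a subsequential limit $K_\infty$ and separating by a half-space).

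In the necessity direction your organization differs slightly from the paper and is arguably cleaner. The paper treats the two sub-cases of $K$ separately with tailored perturbations: for $K$ a singleton it uses boundary data $\varphi+t$ and invokes Lemma~\ref{lem:strict-decay K=0}; for $K=\Gamma_{nsc}$ with at least two points it picks an extreme point $x_0\in(\overline K)^{ext}\subset\partial\Omega$, uses boundary data $\varphi+t|x-x_0|$, and argues via Proposition~\ref{thm:nsc set} and Remark~\ref{rem:dichotomy for Ktype} that $K_t$ must be empty since only the single boundary point $x_0$ can anchor an affine piece. You instead use the single perturbation $\varphi+\varepsilon$ throughout and split on whether $K_\varepsilon$ (rather than $K$) is a singleton; your affine-combination argument on $\overline E$ with the uniform lower bound $v_\varepsilon\ge\varepsilon$ on $\partial\Omega$ dispatches the positive-dimensional case directly. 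Both routes exploit exactly the same structural facts, so the difference is organizational rather than conceptual.
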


\begin{proof} 
(i): Suppose $|K| =0$.  Then  either $K$ is a singleton, or $K=\Gamma_{nsc}$ containing at least two distinct points.

If $K$ is a singleton, say $K=\{0\} \subset \Omega$. Let $v_t$ solve \eqref{eq:obs problem q} with $v_t=v+t$ on $\partial\Omega$ for $t>0$. 
As in Lemma \ref{prop:non-strict decay},  we have $v_t$ locally uniformly converging to $v$ in $\Omega$ with $v_t \geq v$ and $K_t \subset \{0\}$, where $K_t = \{x \in \Omega:\;v_t(x) = 0\}$. Lemma \ref{lem:strict-decay K=0} excludes the possibility $K_t = \{0\}$. Thus, $K_t = \emptyset$, and therefore, $K$ is unstable. 

If $K=\Gamma_{nsc}$ contains at least two points, we let $x_0 \in \partial \Omega$ be an extremal point of $\overline{K}$.
Let $v_{t}$ solve  \eqref{eq:obs problem q} with $v_{t} =   v + t|x-x_0|$ on $\partial \Omega$. 
As in  Lemma \ref{prop:non-strict decay}, we have $v_t$ locally uniformly converging to $v$,  $v_{t} \geq v$ in $\Omega$, and $K_t \subset K$.
Since $v_t(x_0)=0$, $K_t$ is not a singleton. Proposition \ref{thm:nsc set} and Remark \ref{rem:dichotomy for Ktype}, together with $v_t>0$ on $\partial\Omega\setminus\{x_0\}$, imply $K_t = \emptyset$, and thus $K$ is unstable.

(ii): Suppose $|K| >0$.
Let us show that $K$ is stable.  The argument originates from Lemma 2.20 in our previous work \cite{jin2025regularity}, and we reproduce it here for completeness. 

Let $\{v_i\}_{i\geq 1}$ be a sequence of solutions  of \eqref{eq:obs problem q} locally uniformly converging to $v$. 
 By the Blaschke's selection theorem, we can always extract a subsequence (still indexed by $i = 1, 2, \dots$) such that the convex coincidence sets $K_i := \{v_i = 0\}$ converge to a closed (relative to $\Omega$) and convex set $K_\infty$.

The inclusion $K_\infty \subset K$ 
follows immediately from the uniform convergence, and it suffices to establish that $K_\infty =K$.
If $ K_{\infty} \subsetneq K$, then  we can find an open ball $B_{r_0}(x_0) \subset K \setminus K_{\infty}$. Moreover, we may assume that  $x_0 \notin K_i$ for all $i$ large. Then, due to convexity, there exist half spaces $H_i^-$ containing  $x_0$ such that $H_i^- \cap K_{i} =\emptyset$. Thus, $v_{i} > 0 $ on $ B_{r_0}(x_0)  \cap H_i^-$. Applying Lemma \ref{lem:volume concidence}, we obtain that $\sup_{x \in B_{r_0}(x_0)  \cap H_i^-  } v_{i}\geq c(n,q,\lambda,r_0) >0$. This contradicts to the fact that $v_{i}$ uniformly converges to $v =0$ on $B_{r_0}(x_0) \subset K$.
\end{proof}

\small

\bibliography{references}
\smallskip

\noindent T. Jin

\noindent Department of Mathematics, The Hong Kong University of Science and Technology\\
Clear Water Bay, Kowloon, Hong Kong\\
Email: \textsf{tianlingjin@ust.hk}

\medskip

\noindent X. Tu

\noindent Department of Mathematics, The Hong Kong University of Science and Technology\\
Clear Water Bay, Kowloon, Hong Kong\\[1mm]
Email:  \textsf{maxstu@ust.hk}

\medskip

\noindent J. Xiong

\noindent School of Mathematical Sciences, Laboratory of Mathematics and Complex Systems, MOE\\ Beijing Normal University, 
Beijing 100875, China\\
Email: \textsf{jx@bnu.edu.cn}

\end{document}